\numberwithin{equation}{subsection}
\newtheorem{theorem}[subsubsection]{Theorem}
\newtheorem{lemma}[subsubsection]{Lemma}
\newtheorem{proposition}[subsubsection]{Proposition}
\newtheorem{corollary}[subsubsection]{Corollary}
\theoremstyle{definition}
\newtheorem{definition}[subsubsection]{Definition}
\theoremstyle{remark}
\newtheorem{remark}[subsubsection]{Remark}
\let\realequation\equation
\def\equation{\setcounter{equation}{\arabic{subsubsection}}%
   \refstepcounter{subsubsection}%
   \realequation}
\newcommand{\Isog}{\mathrm{Isog}}
\newcommand{\ad}{\mathrm{ad}}
\newcommand{\Gr}{\mathrm{Gr}}
\newcommand{\Sh}{\mathrm{Sh}}
\newcommand{\ord}{\mathrm{ord}}
\newcommand{\Frob}{\mathrm{Frob}}
\newcommand{\Lie}{\mathrm{Lie}}
\newcommand{\sS}{\mathscr{S}}
\newcommand{\bbA}{\mathbb{A}}
\newcommand{\bbC}{\mathbb{C}}
\newcommand{\bbD}{\mathbb{D}}
\newcommand{\bbF}{\mathbb{F}}
\newcommand{\bbG}{\mathbb{G}}
\newcommand{\bbR}{\mathbb{R}}
\newcommand{\bbQ}{\mathbb{Q}}
\newcommand{\bbZ}{\mathbb{Z}}
\newcommand{\calH}{\mathcal{H}}
\newcommand{\calL}{\mathcal{L}}
\newcommand{\calS}{\mathcal{S}}
\newcommand{\frakp}{\mathfrak{p}}
\newcommand{\frakg}{\mathfrak{g}}
\newcommand{\frakh}{\mathfrak{h}}
\newcommand{\GSp}{\mathrm{GSp}}
\newcommand{\GSpin}{\mathrm{GSpin}}
\newcommand{\GL}{\mathrm{GL}}
\newcommand{\der}{\mathrm{der}}
\newcommand{\an}{\mathrm{an}}
\newcommand{\RZ}{\mathrm{RZ}}
\newcommand{\Qell}{\bar{\mathrm{Q}}_\ell}
\newcommand{\barSs}{\bar{\mathscr{S}}}
\newcommand{\Cor}{\mathrm{Cor}}
\newcommand{\IC}{\mathrm{IC}}
\begin{document}

\title{Semisimplicity of \'{e}tale cohomology of certain Shimura varieties}

\author{Si Ying Lee}

\begin{abstract}
Building on work of Fayad and Nekov\'{a}\v{r}, we show that a certain part of the etale cohomology of some abelian-type Shimura varieties is semisimple, assuming the associated automorphic Galois representations exists, and satisfies some good properties. The proof combines an abstract semisimplicity criterion of Fayad-Nekov\'{a}\v{r} with generalized Eichler-Shimura relations for abelian type Shimura varieties and partial Frobenii. We apply this to some symplectic and orthogonal Shimura varieties.
\end{abstract}

\maketitle
\setcounter{tocdepth}{1}
\section{Introduction}
Let $G$ be a reductive group over $\bbQ$, and $X$ be a conjugacy class of homomorphisms 
\begin{equation*}
    h:\mathrm{Res}_{\bbC/\bbR}\bbG_m\rightarrow G_{\bbR}
\end{equation*}
such that $(G,X)$ is a Shimura datum. Given a compact open subgroup $K\subset G(\bbA_f)$, we can form the associated Shimura variety
\begin{equation*}
    \mathrm{Sh}_K(G,X)(\bbC)=G(\bbQ)\backslash X\times G(\bbA_f)/ K,
\end{equation*}
which is a complex manifold for $K$ small enough, and has a canonical model over $E$, for some number field $E$.

Consider a representation valued in complex vector spaces
\begin{equation*}
    \xi:G_\bbC\rightarrow \GL(V_\xi)
\end{equation*}
such that $\xi(Z(\bbQ)\cap K)=1$, where $Z$ is the center of $G$. This gives rise to a locally constant sheaf of complex vector spaces 
\begin{equation*}
    \calL_\xi=G(\bbQ)\backslash V_\xi\times X\times G(\bbA_f)/ K.
\end{equation*}

Suppose now that $G^\der$ is anisotropic, so that the Shimura variety $\Sh_K(G,X)$ is compact. This assumption is not necessary for the statements of the theorems, as long as we replace the cohomology groups with the intersection cohomology; we assume this here for simplicity. We now consider the complex analytic cohomology of the tower of the Shimura variety, i.e.
\begin{equation*}
    H^i(\Sh(G,X)^{\an},\calL_\xi):=\lim_{\substack{\to \\ K}}H^i(\Sh_K(G,X)^{\an},\calL_\xi).
\end{equation*}
Let $A_G$ be the largest $\mathbb{Q}$-split subtorus of $Z_G$. Choose $h\in X$, and let $K_\infty$ be the stabilizer of $h$ in $G(\bbR)$. By Matsushima's formula for $L^2$-cohomology established by Franke \cite{Franke}, and (the proof of) Zucker's conjecture, we have a decomposition of $H^i(\Sh(G,X)^{\an},\calL_\xi)$ in terms of Lie algebra cohomology 
\begin{equation*}
    H^i(\Sh(G,X)^{an},\calL_\xi)=\bigoplus_{\pi=\pi_\infty\otimes\pi^\infty}m(\pi)H^i(\mathfrak{a}_G\backslash\frakg,K_\infty;\pi_\infty\otimes\xi)\otimes (\pi^\infty),
\end{equation*}
where $\mathfrak{a}_G=\Lie(A_G)$, and $\pi$ runs through unitary automorphic representations of $G(\bbA)$, and $m(\pi)$ is the multiplicity of $\pi$ appearing in $L^2(G(\bbQ)\backslash G(\bbA),\omega)$, the space of measurable functions on $G(\bbQ)\backslash G(\bbA)$ which are square-integrable modulo center and with infinitesimal character $\omega$.

Fix a prime $\ell$, and an isomorphism $\iota:\overline{\bbQ}_\ell\xrightarrow{\sim}\bbC$. The representation $\xi$ gives rise to an $\ell$-adic automorphic sheaf, in the following way. $\xi$ gives rise to a representation valued in $\overline{\bbQ}_\ell$-vector spaces
\begin{equation*}
    \xi_\ell:G_{\overline{\bbQ}_\ell}\rightarrow \GL(V_{\xi,\ell}),
\end{equation*}
and similarly gives rise to a sheaf
\begin{equation*}
    \calL_{\xi,\ell}=G(\bbQ)\backslash V_{\xi,\ell}\times X\times G(\bbA_f)/ K.
\end{equation*}
Since $\Sh_K(G,X)$ is defined over $E$, we can consider the \'{e}tale cohomology of the tower 
\begin{equation*}
    H^i_{\acute{e}t}(\Sh(G,X)_{\bar{E}},\calL_{\xi,\ell}):=\lim_{\substack{\to \\ K}}H^i_{\acute{e}t}(\Sh_K(G,X)_{\bar{E}},\calL_{\xi,\ell}),
\end{equation*}
then we have an isomorphism 
\begin{equation*}
    H^i_{\acute{e}t}(\Sh(G,X)_{\bar{E}},\calL_{\xi,\ell})\simeq H^i(\Sh(G,X)^{\an},\calL_\xi)\otimes_\iota \overline{\bbQ}_\ell,
\end{equation*}
which is $G(\bbA_f)$-equivariant, and thus we have a decomposition
\begin{equation*}
    H^i_{\acute{e}t}(\Sh(G,X)_{\bar{E}},\calL_{\xi,l})=\bigoplus_{\pi^\infty}V^i(\pi^\infty)\otimes (\pi^\infty),
\end{equation*}
where $V^i(\pi^{\infty})=\mathrm{Hom}_{G(\bbA_f)}(\pi^\infty,H^i_{\acute{e}t}(\Sh(G,X)_{\bar{E}},\calL_{\xi,\ell}))$. If $\Sh_K(G,X)$ is not compact, we consider instead the intersection cohomology of the Bailey-Borel compactification $\Sh(G,X)^{\min}$,
\[V^i(\pi^{\infty})=\mathrm{Hom}_{G(\bbA_f)}(\pi^\infty,IH^i_{\acute{e}t}(\Sh(G,X)^{\min}_{\bar{E}},\IC(\calL_{\xi,\ell}))).\]
We now let $\sigma_{\pi^\infty}$ be the associated Galois representation
\begin{equation*}
    \sigma_{\pi^\infty}:\mathrm{Gal}(\bar{E}/E)\rightarrow V^i(\pi^\infty).
\end{equation*}
On the other hand, the global Langlands correspondence conjectures that to the automorphic representation $\pi$ we have an associated semisimple Galois representation $\phi_\pi:\mathrm{Gal}(\bar{E}/E)\rightarrow {^L}G$, and 
we denote the composition
\begin{equation*}
    \rho_{\pi,\mu}:\mathrm{Gal}(\bar{E}/E)\xrightarrow{\phi_\pi} {^L}G\xrightarrow{r_{-\mu}} \GL(V_{-\mu}),
\end{equation*}
where $\mu$ is the minuscule Hodge cocharacter associated to the Shimura datum, and $V_{-\mu}$ is the vector space of the associated highest weight representation $r_{-\mu}$. Here, $r_{-\mu}$ is the highest weight representation associated with the dominant Weyl conjugate of $\mu$. We remark here that this $-\mu$ is such that the action of the geometric Frobenius at the unramified places corresponds to the \emph{left} Hecke action of $\mu^{-1}(p)$ under Artin reciprocity.

When the group $G$ is of the form $\mathrm{Res}_{F/\bbQ}H$ for some connected reductive group $H$, and $F$ is a totally real field of degree $d$, the global Langlands correspondence then further conjectures that we have a decomposition (perhaps after passing to a finite extension $E'$ of $E$) of $\tilde{\rho}$ as
\begin{equation*}
    \rho_{\pi,\mu}=\bigotimes_{v|\infty}\rho_{\pi,\mu,v},
\end{equation*}
where the product runs over infinite places of $F$. Moreover, $\rho_{\pi,\mu,v}$ should have the following form. Observe that over $\bbC$, we have a decomposition
\begin{equation*}
    \mathrm{Res}_{F/\bbQ}H\simeq \prod_{v|\infty} H,
\end{equation*}
and the cocharacter $\mu$ also decomposes as $\prod_v \mu_v$. Then we should obtain $\rho_{\pi,\mu,v}$ as the composition 
\begin{equation*}
    \rho_{\pi,\mu,v}=\rho_{\pi,\mu_v}: \mathrm{Gal}(\bar{E}'/E')\xrightarrow{\phi_\pi} {^L}H\xrightarrow{r_{-\mu_v}} \GL(V_{-\mu_v}),
\end{equation*}
where here we view $\pi$ as an automorphic representation of $H$ over $F$.

We expect a close relationship between $\sigma_{\pi^\infty}$ and $\rho_{\pi,\mu}$. In the case where $\pi$ does not have endoscopy (for instance, if it is a twist of Steinberg at some finite place), then we expect to have
\[\sigma_{\pi^\infty}\simeq \rho_{\pi,\mu}^{\oplus m}\]
for some integer $m$, and thus if $\rho_{\pi,\mu}$ is irreducible, $\sigma_{\pi^\infty}$ semisimple. Such a result may be shown via the Langlands-Kottwitz method, in combination with the results of this paper. 

The main theorem of this paper is the following:
\begin{theorem}
\label{thm:main}
Let $(G,X)$ be a Shimura datum of abelian type such that $G=\mathrm{Res}_{F/\bbQ}H$ for some connected reductive group $H$, and totally real number field $F$. Let $\pi$ be an automorphic representation of $G(\bbA_{\bbQ,f})=H(\bbA_{F,f})$. For all $v$, suppose that the $^{L}H$-valued Galois representation associated to $\pi$ exists, and we consider for all $v$ the composition with the highest weight representation $\rho_{\pi,\mu_v}:\mathrm{Gal}(\bar{F}/F)\rightarrow {^L}H\rightarrow \GL(V_{-\mu_v})$. Suppose that moreover we also know that 
\begin{enumerate}
    \item $\rho_{\pi,\mu_v}$ is strongly irreducible
    \item For all primes $\frakp$ of $E$ such that $\frakp|l$, the Hodge-Tate weights of $\rho_{\pi,\mu_v}|_{D_\frakp}$ are distinct.
\end{enumerate}
Then $\sigma_{\pi^\infty}$ is a semisimple representation.
\end{theorem}

This result was previously known in the cases where the abelian type Shimura variety had a cover by an associated PEL type A Shimura variety, as shown in \cite{FN19}. Note that if $G$ is adjoint, then the group $G$ is always a product of groups of the form $\mathrm{Res}_{F/\mathbb{Q}}H$ for some absolutely simple adjoint group $H$.

\begin{remark}
    The statement of Theorem \ref{thm:main} is most interesting when $F\neq\bbQ$, and $H$ is absolutely simple. Indeed, while the statement is still valid even when $F=\bbQ$, in this situation it is often the case that the Galois representations $\phi_\pi$ is constructed in the cohomology of $\Sh_K(G,X)$, and thus the statement is tautological. Moreover, the condition on $\rho_{\pi,\mu_v}$ to be strongly irreducible is rarely satisfied if $H$ is not absolutely simple.
\end{remark}
We can apply the above theorem to situations beyond the cases of unitary groups considered in \cite{FN19}. In particular, we can consider now the cases where $H$ is an inner form of the groups $\GSp_{2g}$ or $\mathrm{GSO}_{2n}$, where the automorphic Galois representations have been constructed by  Kret and Shin in \cite{KS2020galois} and \cite{KS2020}. In the following theorem, we will let $G^*$ be one of the following groups:
\begin{description}
\item[symplectic] $G^*=\GSp_{2g}$
\item[orthogonal, $n$ even]  $G^*=\mathrm{GSO}_{2n}$
\item[orthogonal, $n$ odd] $G^*$ is a non-split quasi-split form of $\mathrm{GSO}_{2n}$ relative to $E/F$, a CM extension
\end{description}
We now assume that $H$ is an inner form of $G^*$. Let $d$ be the dimension of $\Sh(G,X)$. 
\begin{theorem}
\label{thm:GSpapplication}
Let $\pi$ be a cuspidal $L$-algebraic automorphic representation of $H(\bbA_F)$, satisfying
\begin{enumerate}
    \item There is a finite $F$-place $v_{St}$ such that $H_{F_{v_{St}}}$ and $G^*_{F_{v_{St}}}$ are isomorphic, and under this isomorphism $\pi_{v_{St}}$ is the Steinberg representation of $G^*(F_{v_{St}})$ twisted by a character.
    \item $\pi_\infty|\mathrm{sim}|^{-n(n-1)/4}$ is $\xi$-cohomological for an irreducible algebraic representation $\xi = \otimes_{y:F\rightarrow \bbC} \xi_y$ of the group $(\mathrm{Res}_{F/\bbQ}G^*)_\bbC$, where $\mathrm{sim}$ is the similitude factor map $\mathrm{sim}: G^* \rightarrow \bbG_m$.
    \item The representation $\pi_v$ is regular after composing with the representation $\mathrm{GSpin}_{2g+1}\xrightarrow{\mathrm{spin}}\GL_{2^g}$ if symplectic (resp. $\mathrm{GSpin}_{2n} \xrightarrow{\mathrm{std}} \GL_{2n}$ if orthogonal) at every infinite place $v$ of $F$.
\end{enumerate} 
If moreover the $\ell$-adic Galois representation $\phi_\pi:\mathrm{Gal}(\bar{F}/F)\rightarrow \widehat{H}$ satisfies
\begin{enumerate}
\setcounter{enumi}{3}
    \item The image of $\phi_\pi$ is Zariski dense in $\widehat{H}$,
\end{enumerate}
Then the Galois module 
\begin{equation*}
    \mathrm{Hom}_{G(\bbA_f)}(\pi^\infty,IH_{\acute{e}t}^d(\Sh(G,X)^{\min}_{\bar{E}},\IC(\calL_{\xi,\ell})))
\end{equation*}
is semisimple.
\end{theorem}

To show this result, we first define a `partial Frobenius isogeny' at primes $p$ which are split in $F$, and then show the Eichler-Shimura congruence relations for these partial Frobenius for split groups, using results from \cite{Lee20}. More precisely, we show the following:
\begin{theorem}\label{thm:ESpartialfrob}
    Let $(G,X)$ be a Shimura datum of abelian type, such that $G=\mathrm{Res}_{F/\bbQ}H$ for some connected reductive group $G$, and totally real number field $F$ of degree $d$. Let $p$ be a prime which is split in $F$ and for which the group $G_{\bbQ_p}$ is split. Then for all $i=1,\dots, d$ we have a partial Frobenius correspondence $\Frob_{\frakp_i}$ such that
    \begin{equation*}
        \Frob=\prod_i \Frob_{\frakp_i}
    \end{equation*}
    where $\frakp_i$ is a prime of $F$ dividing $p$, and in the ring of algebraic correspondences of the mod $p$ reduction of $\Sh_K(G,X)$,
    \begin{equation*}
        H_{i}(\Frob_{\frakp_i})=0,
    \end{equation*}
    where $H_i$ is the Hecke polynomial at $\frakp_i$, which is a renormalized characteristic polynomial of the irreducible representation of $\widehat{H}$ with highest weight $\hat{\mu}_i$.
\end{theorem}
This is a refinement of the Eichler-Shimura relation considered for Hodge type Shimura varieties in \cite{Lee20}. The Hecke polynomial $H_{G,X}$ defined there (which is valid for all Shimura varieties) will be a tensor product of the polynomials $H_i$, and in particular, under the same assumptions of splitting of the group $G$ at the prime $p$, we get the Eichler-Shimura relations for the abelian type Shimura varieties considered in the theorem.

We also remark that the construction of this partial Frobenius isogeny should be viewed as a shadow of the plectic conjecture of Nekov\'{a}\v{r}-Scholl, and while in this paper we construct it only with the additional assumption that $G_{\bbQ_p}$ is split, in fact the construction does not need this assumption.

Once we have the generalized Eichler-Shimura relations, we can combine the above theorem with a semisimplicity criterion for Lie algebras shown in \cite{FN19}, which allows us to deduce the main result. Roughly speaking, the generalized Eichler-Shimura relations, along with the condition of distinct Hodge-Tate weights, shows that we have many Frobenius elements, in particular a positive density of them, which are semisimple. This, together with a strong condition on irreducibility of the Galois representation coming from the Langlands correspondence, allows us to show that the image of Galois is a reductive group, and hence the representation was semisimple.

We emphasize that the main theorem here \emph{cannot} be used to prove semisimplicity of the Galois representations constructed in, for instance, \cite{BLGGT}, \cite{KS2020} or \cite{KS2020galois}. Instead, the main novelty of this result is when one knows, for instance from \cite{KSZ21}, the expected shape of the semisimplification of the Galois representation appearing in the cohomology of the Shimura variety, via the study of zeta functions of Shimura varieties and the Langlands-Kottwitz method. In this case, we can conclude that the equality holds even without taking semisimplification.  
\subsection*{Acknowledgements}
Many thanks to my advisor, Mark Kisin, for introducing this problem to me, and for many helpful conversations. Thanks also to Sug Woo Shin, for suggestions about applications in Section 4. I would also like to thank the referee for detailed feedback, corrections and suggestions.

\section{Eichler-Shimura Relations}
In this section, we will prove the key technical result Theorem \ref{thm:ESpartialfrob}, and the corresponding result on (intersection) cohomology. 

For this entire section, we will fix $(G,X)$ a Shimura datum of abelian type such that $G =\mathrm{Res}_{F/\bbQ} H$ with $F$ totally real of degree $d$. Choose a prime $p>2$ that is split in $F$ such that $G_{\bbQ_p}$ is split. Let $p=\frakp_1\dots\frakp_d$, for $\frakp_i$ primes of $F$ above $p$. In this case, we have
\[G_{\bbQ_p} = \prod_{\frakp_i|p} H_i.\]
Fix a choice of Borel $B$ and torus $T$ of $G_{\bbQ_p}$. 

For any $h\in X$, we can define
\[\mu_h:\bbC^\times\xrightarrow{x\mapsto (x,\bar{x})} \mathbb{C}^\times \times \mathbb{C}^\times=\mathbb{S}\rightarrow G(\mathbb{C}).\]
The conjugacy class $\{\mu_h\}$ is defined over the reflex field $E$. Let $v|p$ be a prime of $E$. In the setup above where $G_{\bbQ_p}$ is split, we thus get that the conjugacy class of cocharacters $\{\mu\}$ obtained from $X$ is defined over $\bbQ_p$, and thus in this case we have $E_v=\mathbb{Q}_p$. Let $\mu\in X_*(T)$ be a dominant representative of this conjugacy class, and the decomposition of $G_{\bbQ_p}$ means we can write $\mu=\prod_i\mu_i$.

Fix a prime $\ell\neq p$. Let $K\subset \bbA_f$ be an open compact subgroup which we fix for this entire paper, and for which $K_p$ is hyperspecial. Let $\mathbb{T}^p_f$ denote the prime-to-$p$-adelic Hecke algebra $\calH(G(\bbA^p_f)//K^p,\bbQ)$ of $K^p$-bi-invariant compactly supported functions on $G(\bbA^p_f)$. 

Let $\xi:G_{\Qell}\rightarrow \GL(V_\xi)$ be a representation such that $\xi(Z(\bbQ)\cap K)=1$. As explained in the introduction, this gives rise to an $\ell$-adic local system $\calL_{\xi,\ell}$ on $\Sh_K(G,X)$.

For a choice of hyperspecial $K_p$ which decomposes as $K_p=\prod_i K_{p,i}$ we get for each $i$ Hecke polynomials $H_{\mu_i} \in \mathcal{H}_i:=\mathcal{H}(H_i(\bbQ_p)//K_{i,p},\bbQ)$ the Hecke algebra of $K_{ip}$ bi-invariant functions on $H_i(\bbQ_p)$. We define $H_{\mu_i}$ in \ref{subsect:Heckealg}. We also define the Hecke algebra at $p$, $\calH_p:=\mathcal{H}(G(\bbQ_p)//K_{p},\bbQ)$.

The main result of this section is the following:
\begin{theorem}\label{ref:mainthms2}
There are operators $\mathrm{Frob}_{\frakp_i}$ for $i=1,\dots,d$ acting on $IH^j(\Sh_K(G, X)^{\min}_{\bar{E}},\IC(\calL_{\xi,\ell}))$ such that
\begin{enumerate}
    \item The action of any pair $\Frob_{\frakp_i}$, $\Frob_{\frakp_j}$ commute, as does the action of any $\Frob_{\frakp_i}$ with the $\mathbb{T}^p_f$-action and the $\mathcal{H}_p$-action;
    \item $\Frob_{\frakp_1}\circ\dots\circ \Frob_{\frakp_d}=\Frob_v$, where $\Frob_v$ is the action of the geometric Frobenius at $v$;
    \item For each $i$, $H_{\mu_i}(\Frob_{\frakp_i})$ acts as 0 on $IH^j(\Sh_K(G, X)^{\min}_{\bar{E}},\IC(\calL_{\xi,\ell}))$.
\end{enumerate}
\end{theorem}
In the theorem, $IH^j(\Sh_K(G, X)^{\min}_{\bar{E}},\IC(\calL_{\xi,\ell}))$ denotes the (\'{e}tale) intersection cohomology of the Bailey-Borel compactification $\Sh_K(G, X)^{\min}$ of $\Sh_K(G, X)$, with coefficients in the perverse intermediate extension of $\calL_{\xi,\ell}$ to $\Sh_K(G, X)^{\min}$. More precisely, if we let $j:\Sh_K(G,X)\hookrightarrow \Sh_K(G,X)^{\min}$ be the open embedding, then intermediate extension gives us a perverse $\ell$-adic sheaf $\IC(\calL_{\xi,\ell}):=j_{!\ast}\calL_{\xi,\ell}$, and $IH^j(\Sh_K(G, X)^{\min}_{\bar{E}},\IC(\calL_{\xi,\ell}))$ is the $j$-th cohomology group of $\Sh_K(G,X)^{\min}$ with coefficients in $\IC(\calL_{\xi,\ell})$.
\subsection{Integral model}
We first recall the construction in \cite[\S3]{K2010}, of the integral model $\sS_K(G,X)$ of $\Sh_K(G,X)$ over $O_{E_v}$. Let $(G_1,X_1)$ denote a Hodge type Shimura datum which covers $(G,X)$, i.e. such that there exists a central isogeny
\begin{equation*}
    f:G_1^{\der}\rightarrow G^{\der}
\end{equation*}
which induces an isomorphism on adjoint Shimura data $(G_1^{\ad},X_1^{\ad})\simeq (G^{\ad},X^{\ad})$. In fact, we may choose $(G_1,X_1)$ as in the following lemma:
\begin{lemma}
There exists a covering Hodge type datum $(G_1,X_1)$ such that the local reflex field $E_{1,v'}$ of $(G_1,X_1)$ for some prime $v'|p$ is $\bbQ_p$. 
\end{lemma}
\begin{proof}
    Since $G_{\bbQ_p}$ is split this implies that $G^\ad$ is also split over $\bbQ_p$. We can now apply \cite[Lemma 4.6.22]{KP2018}. More precisely, in the proof of loc. cit. it is shown that one may choose a covering Shimura datumn $(G_1,X_1)$ of Hodge type such that any prime $v|p$ of the reflex field of $(G^\ad,X^\ad)$, denoted $E(G^\ad,X^\ad)$, splits in $E(G_1,X_1)$. In particular, since $E(G^\ad,X^\ad)_v=\bbQ_p$, we may in fact choose any prime $v'$ over $v$ and we will have $E(G_1,X_1)_{v'}=\bbQ_p$.  
\end{proof}   

Fix a connected component $X^+\subset X$. Let $\Sh_K(G,X)^+$ denote the geometric connected component of $\Sh_K(G,X)$ containing $\{1\}\times X^+$. Consider now the geometric connected component
\[\Sh_{K_{1p}}(G_1, X_1)^+ = \lim_{\substack{\leftarrow\\K_1^p}}\Sh_{K_{1p}K^p_1} (G_1, X_1)^+\]
of $\Sh_{K_{1p}} (G_1, X_1) = \lim_{\leftarrow {K_1^p}}\Sh_{K_{1p}K^p_1} (G_1, X_1)$, where $K_{1p} = G_1(\bbZ_p)$. Let $\sS_{K_{1p}} (G_1, X_1)$ be the integral model of the Hodge type Shimura variety constructed in \cite[\S2]{K2010}, and let $\sS_{K_{1p}} (G_1, X_1)^+$ be the Zariski closure of $\Sh_{K_{1p}} (G_1, X_1)^+$ in $\sS_{K_{1p}} (G_1, X_1)$. Write $Z = Z_G$, and define
\[\sS_{K_p}(G,X) = [\mathscr{A}(G_{\bbZ_{(p)}})\times \sS_{K_{1p}} (G_1, X_1)^+]/\mathscr{A}(G_{1\bbZ_{(p)}})^\circ,\]
where 
\[\mathscr{A}(G_{\bbZ_{(p)}}) = G(\bbA^p_
f)/Z(\bbZ_{(p)})^- \ast_{G(\bbZ_{(p)})^+/Z(\bbZ_{(p)})} G^{\ad}(\bbZ_{(p)})^+,\]
and
\[\mathscr{A}(G_{\bbZ_{(p)}})^\circ = G(\bbZ_{(p)})^-_
+/Z(\bbZ_{(p)})^- \ast_{G(\bbZ_{(p)})^+/Z(\bbZ_{(p)})}G^{\ad}(\bbZ_{(p)})^+.\]
It is shown in \cite[Theorem 3.4.10]{K2010} that $\sS_{K_p}(G,X)$ as defined above is then the canonical integral model of $\Sh_{K_p}(G,X)$.

For any abelian type Shimura datum unramified at $p$ with level $K$ which is hyperspecial at $p$, we denote by
\[\barSs_{K}(G, X):=\sS_{K}(G, X)_{k_v},\]
where $k_v$ is the residue field of $O_{E_v}$.

We also recall the result of Lan-Stroh \cite[Corollary 4.10]{LanStroh2}, which will allow us to deduce results about cohomology from results on the mod $p$ fiber:
\begin{theorem}
\label{thm:Lanstroh}
There is an isomorphism
\[IH^i(\Sh_K(G, X)^{\min}_{\bar{E}},\IC(\calL_{\xi,\ell}))\simeq IH^i(\barSs_K(G, X)^{\min}_{\bar{k}_v},\IC(\calL_{\xi,\ell})),\]
where on the right-hand side we abuse notation and let $\calL_{\xi,\ell}$ also denote the mod $p$ fiber of the canonical extension of the $\ell$-adic local system $\calL_{\xi,\ell}$ from the generic fiber. This isomorphism is equivariant for the action of the prime-to-$p$ Hecke algebra $\mathbb{T}^p_f$, and also the action of the Frobenius at $p$.
\end{theorem}
\subsection{Newton stratification}\label{subsec:Newtonstrata}
We now recall the construction of Newton strata for Shimura varieties of abelian type, as constructed in \cite[\S2]{ShenZhang2017}. Let $L=W(\bar{\bbF}_p)[1/p]$, with Frobenius given by $\sigma$, and $\upsilon=\sigma(\mu^{-1})=\mu^{-1}$. Recall that $B(G)$ denotes the set of $\sigma$-conjugacy classes of $G(L)$. The Newton strata of the Shimura variety are indexed by elements in $B(G,{\upsilon})$.

The following result of Kottwitz is a key input in the construction.
\begin{lemma}[{\cite[Section 6.5]{Kott97}}]
\label{lemma:B(G)}
    Let $G$ be a connected reductive group over $\bbQ_p$, and let $\{\upsilon\}$ be a conjugacy class of minuscule cocharacters. Then, projecting to the adjoint group $G^\ad$ induces a map $B(G)\rightarrow B(G^\ad)$, and restricting this map to $B(G,\{\upsilon\})$ induces a bijection between $B(G,\{\upsilon\})$ and $B(G^{\ad},\{\upsilon^{\ad}\})$.
\end{lemma}

In \cite[\S2.3]{ShenZhang2017}, the Newton strata are first constructed for the adjoint Shimura variety. More precisely, let $K_p^\ad$ be the hyperspecial subgroup of $G^\ad(\bbQ_p)$ determined by $K_p$. (Equivalently, from the arguments in \cite[Corollary~3.4.14]{K2010}, this is also the hyperspecial subgroup determined by $K_{1p}$.) In \cite[2.3.3,2.3.4]{ShenZhang2017}, it is shown that if we restrict the Newton stratification to the connected component $\barSs_{K_{1p}}(G_1, X_1)^{+} \hookrightarrow \barSs_{K_{1p}}(G_1, X_1)$, and we denote by $\barSs_{K_{1p}}(G_1, X_1)^{[b_1],+}$ the strata corresponding to $[b_1]\in B(G_1,\{\upsilon_1\})$, then taking
\[\barSs_{K^\ad_p}(G^\ad, X^\ad)^{[b^\ad]} = [\mathscr{A}(G^\ad_{\bbZ_{(p)}})\times \barSs_{K_{1,p}}(G_1,X_1)^{[b_1]+}]/\mathscr{A}(G_{1,\bbZ_{(p)}})^\circ\]
gives a well-defined stratification on $\barSs_{K^\ad_p}(G^\ad, X^\ad)$. Here, we let $[b^\ad]$ be the image of $[b_1]$ under the bijection of Lemma \ref{lemma:B(G)}.

We have a natural map of integral models $\sS_{K_p}(G,X)\rightarrow \sS_{K^\ad_p}(G^\ad,X^\ad)$ induced by the map of Shimura data $(G,X)\rightarrow (G^\ad,X^\ad)$ and the extension property, which may moreover be seen explicitly as follows. 
\[\sS_{K_p}(G,X)\simeq [\mathscr{A}(G_{\bbZ_{(p)}})\times \sS_{K_p}(G,X)^+]/\mathscr{A}(G_{\bbZ_{(p)}})^\circ \rightarrow [\mathscr{A}(G^\ad_{\bbZ_{(p)}})\times \sS_{K_p}(G,X)^+]/\mathscr{A}(G_{\bbZ_{(p)}})^\circ,\]
where the last term is isomorphic to $\sS_{K^\ad_p}(G^\ad,X^\ad)$. In particular, if we let 
\[\Delta = \ker(\mathscr{A}(G_{\bbZ_{(p)}})^\circ \rightarrow \mathscr{A}(G^\ad_{\bbZ_{(p)}}));\]
observe that this is a finite group, and we have that $\sS_{K^\ad_p}(G^\ad,X^\ad)^+\simeq\sS_{K_p}(G,X)^+/\Delta$ is a finite \'{e}tale quotient.

The Newton strata for $\barSs_{K_p}(G, X)$ is then defined in \cite[2.3.5]{ShenZhang2017} to be the pullback of the Newton strata for $\barSs_{K^{\ad}_p}(G^{\ad}, X^{\ad})$ via the natural map
\begin{equation*}
    \barSs_{K_p}(G, X)\rightarrow \barSs_{K^{\ad}_p}(G^{\ad}, X^{\ad}).
\end{equation*}
On geometric connected components, the construction implies that we have the following compatibility of Newton strata. In particular, we see that the Newton strata are exactly pulled back along the maps
\begin{equation*}
    \barSs_{K_{1p}}(G_1, X_1)^{+}\rightarrow \barSs_{K_{p}}(G, X)^{+}\rightarrow \barSs_{K^{\ad}_{p}}(G^{\ad}, X^{\ad})^{+}.
\end{equation*}
Moreover, as stated in \cite[2.3.4]{ShenZhang2017} we can also realize the Newton strata for $(G,X)$ from the one for $(G_1,X_1)$ in terms of the action of $\mathscr{A}(G_{\bbZ_{(p)}})$. In fact we have 
\[\barSs_{K_{p}}(G,X)^{[b]} = [\mathscr{A}(G_{\bbZ_{(p)}}) \times \barSs_{K_{1p}}(G_1,X_1)^{[b],+}]/\mathscr{A}(G_{1\bbZ_{(p)}})^\circ.\]

We can take quotients by prime-to-$p$ level $K^p$ to obtain the Newton strata at finite level $K$.

Finally, observe that by construction we have $\mu_1$ is defined over $\bbQ_p$, and thus the ordinary locus of $\barSs_{K_{1p}}(G_1,X_1)$ is non-empty, and thus we will also refer to the maximal Newton strata for the Shimura varieties $\barSs_{K_{p}}(G,X)$ and $\barSs_{K^\ad_{p}}(G^\ad,X^\ad)$ as the ordinary locus, even though these are not parametrizing abelian varieties.
\subsection{Rapoport-Zink spaces for abelian type}
For this entire section, we let $(G,[b],\{\upsilon\})$ be an unramified local Shimura datum, i.e. $G$ is a reductive group over $\bbQ_p$ which has a reductive model over $\bbZ_p$, which we still denote by $G$, $[b]\in B(G,\{\upsilon\})$, and $\{\upsilon\}$ is a conjugacy class of minuscule cocharacters. If $\mu$ is the dominant Hodge cocharacter of Shimura varieties, then in the local Shimura data we should take $\upsilon=\sigma(\mu^{-1})$, as explained in \cite[1.4.1]{K2017}.

\subsubsection{}We first recall the group-theoretic construction of affine Deligne-Lusztig varieties. Let $LG$ and $L^+G$ be the loop group (resp. positive loop group) defined in \cite[\S3.1]{XZ17}, and we consider the affine Grassmannian $\Gr_G$ defined as the \'{e}tale quotient $LG/L^+G$. Fix a representative $b\in[b]$. The affine Deligne Lusztig variety $X^G_\upsilon(b)$ is the perfect closed subvariety inside $\Gr_G$ whose $\bar{\bbF}_p$-points are given by
\[X^G_\upsilon(b)(\bar{\bbF}_p)=\{g\in G(L)/G(W)\mid g^{-1}b\sigma(g) \in G(W)\upsilon(p)G(W)\}.\]
where $W=W(\bar{\bbF}_p)$.
\subsubsection{} We now want to recall results about Rapoport-Zink spaces, following Shen \cite{shen_2020}.
\begin{theorem}[Theorem 4.6,\cite{shen_2020}]
\label{thm:RZconstruct}
Let $(G, [b], \{\upsilon\})$ be an unramified local Shimura datum of abelian type. Fix a representative $b \in G(L)$ of $[b]\in B(G,\{\upsilon\})$. Then there exists a formal scheme $\RZ(G,b,\{\upsilon\})$, which is formally smooth, formally locally of finite type over $W$, such that moreover we have that the perfection of the reduced special fiber is isomorphic to the affine Deligne-Lusztig variety $X_{\upsilon}^G(b)$.
\end{theorem}
In particular, given an element $g\in X_\upsilon^G(b)$, we will in the following also say that $g\in \RZ(G,b,\{\upsilon\})$.

\subsubsection{} Note that from the construction of $\RZ(G,b,\{\upsilon\})$ that if $M\subset G$ is a Levi subgroup which contains $T$, and we chose a representative $b$ such that $b\in M(L)$, then we have a natural inclusion 
\[\RZ(M,b,\{\upsilon\})\hookrightarrow \RZ(G,b,\{\upsilon\})\]
as a closed formal subscheme.

Moreover, there is also a uniformization map on isogeny classes, from \cite[Theorem~6.7]{shen_2020}.
\begin{theorem}
    \label{thm:RZuniform}
    We have an isomorphism of formal schemes
    \[\Theta:\bigsqcup_{\phi,\phi^\ad=\phi_0}I_\phi(\bbQ)\backslash \RZ(G,b,\{\upsilon\})\times G(\mathbb{A}^p_f)\xrightarrow{\sim} \widehat{S_{K_p}}_{/Z_{\phi_0}}.\]
\end{theorem}
The group $I_\phi$ is defined in loc. cit. Here, $\phi_0$ (resp. $\phi$) is an admissible morphism of Galois gerbs for $G^\ad$ (resp. $G$), and $\widehat{S_{K_p}}_{/Z_{\phi_0}}$ is the completion of $\sS_{K_p}(G,X)$ along $Z_{\phi_0}$ a locally closed subspace of $\barSs_{K_p}(G,X)$ defined in loc. cit. which is the union of mod $p$ isogeny classes corresponding to $\phi_0$. Since the exact definition is not needed for the rest of this paper, we do not recall the definition of an admissible morphism of Galois gerbs here, but it is defined in \cite[3.3.6]{K2017}. In this paper it suffices to understand this as an indexing set for isogeny classes. 

\subsubsection{}\label{subsub:uniformbasepoint} Since we would like to understand how this map works with a fixed base-point, let us briefly recall the construction. For the Hodge type Shimura datum $(G_1,X_1)$ and a point $x\in \barSs_{K_p}(G_1,X_1)^b(\bar{\bbF}_p)$ if we choose the representative $b_1 \in [b_1]$ associated to $x$ as in \cite[1.4.1]{K2017}, we have a uniformization map
\[\Theta_{b_1}: \RZ(G_1,b_1,\{\upsilon_1\})\times G_1(\bbA^p_
f ) \rightarrow \widehat{\sS}_{K_{1p}}(G_1,X_1)^{[b_1]},\]
sending the identity element $1\in \RZ(G_1,b_1,\{\upsilon_1\})$ to $x$. In fact, this induces an injective map
\[\Theta_{x,b_1}:I_{\phi_1}(\bbQ)\backslash\RZ(G_1,b_1,\{\upsilon_1\})\times G_1(\bbA^p_
f ) \rightarrow \widehat{\sS}_{K_{1p}}(G_1,X_1)^{[b_1]},\]
where $\widehat{\sS}_{K_{1p}}(G_1,X_1)^{[b_1]}$ is the completion of $\barSs_{K_{1p}}(G_1,X_1)^{[b_1]}$ along $\sS_{K_{1p}}(G_1,X_1)$. 

Then, as described in \cite[\S6.3]{shen_2020}, there is a relation between the isogeny classes of $G_1$ and $G$, which can be upgraded into a uniformization map for $G$. For $\phi$ an admissible morphism of Galois gerbs for $G$, we define the formal scheme
\[\widehat{S}(G,\phi):=I_{\phi}(\bbQ)\backslash\RZ(G,b,\{\upsilon\})\times G(\bbA^p_
f ),\]
and similarly for $G_1$. Fix $\phi_0$ an admissible morphism of Galois gerbs for $G^\ad$. Let 
\[\widehat{S}(G, \phi_0) = \bigsqcup_{
[\phi],\phi^{\ad}=\phi_0}
\widehat{S}(G,\phi).\]
Denote by $\widehat{S}(G_1,\phi_1)^+$ the fiber of $\Theta_{x,b_1}$ over the connected component $\widehat{\sS}_{K_{1p}}(G_1,X_1)^{[b_1]+}$ (note that this is just the open and closed formal subscheme whose mod $p$ fiber lies in $\barSs_{K_{1p}}(G_1,X_1)^+$.), and similarly for $\widehat{S}(G_1,\phi_0)^+$. From \cite[Proposition 6.6]{shen_2020} we know that
\[\widehat{S}(G,\phi_0)\simeq [\mathscr{A}(G_{\bbZ_{(p)}})\times \widehat{S}(G_1,\phi_0)^+]/\mathscr{A} (G_{1\bbZ_{(p)}})^\circ.\]
Now, given a point $x\in\barSs_{K_p}(G,X)^{[b]}(\bar{\bbF}_p)$, from \cite[2.2.5]{K2010} we may act on it by some element $g^p\in G(\bbA^p_f)$ such that $g^px$ lies in $\barSs_{K_p}(G,X)^{[b]+}(\bar{\bbF}_p)$. Thus, there exists some $x'\in \bar{\calS}_{K_1}(G_1,X_1)^{[b_1]+}(\bar{\bbF}_p)$ lifting $g^px$. We thus have a map with base point $x'$
\[\Theta^+_{x',b_1}:\widehat{S}(G_1,\phi_1)^+\hookrightarrow \widehat{\sS}_{K_{1p}}(G_1,X_1)^{[b_1]+},\]
which on taking product with $\mathscr{A}(G_{\bbZ_{(p)}})$ and quotient by $\mathscr{A}(G^\ad_{\bbZ_{(p)}})^\circ$, thus gives us a map 
\[\Theta_{g^px,b}:\widehat{S}(G,\phi)\hookrightarrow \widehat{S}(G,\phi_0)\hookrightarrow\widehat{\sS}_{K_{p}}(G,X)^{[b]},\]
for some $\phi$ such that $g^px$ lies in $Z_{\phi}$. Now, we can translate by $(g^{p})^{-1}$ and quotient by $K^p$ to get an injective map
\[\Theta_{x,b}:\widehat{S}(G,\phi)\rightarrow \widehat{\sS}_{K}(G,X)^{[b]}.\]
\subsubsection{}\label{subsub:extradependencies} Note here that this map depends on some auxillary choices that we fix once and for all when we are defining the Rapoport-Zink space for $(G,b,\{\upsilon\})$ from $(G_1,b_1,\{\upsilon_1\})$, where $b_1$ is as defined in \ref{subsub:uniformbasepoint}.  More precisely, what we see is that from the proof of \cite[Theorem 4.6]{shen_2020} if we let $\RZ(G_1^\ad,b_1^\ad,\{\upsilon_1^\ad\})^+$ denote the connected component of $\RZ(G_1^\ad,b_1^\ad,\{\upsilon_1^\ad\})$ containing $1$, we have an isomorphism
\[\RZ(G_1^\ad,b_1^\ad,\{\upsilon_1^\ad\})^+\simeq \RZ(G,b,\{\upsilon\})^+,\]
of connected components containing $1$, and also an isomorphism
\[\RZ(G,b,\{\upsilon\})\simeq (J_{b}(\bbQ_p)\times \RZ(G_1^\ad,b_1^\ad,\{\upsilon_1^\ad\})^+)/J_b(\bbQ_p)^+\] 
where $J_b(\bbQ_p)^+\subset J_b(\bbQ_p)$ is the stabilizer of the connected component containing 1 in $X^G_{\upsilon}(b)$.

Thus, if we let $g$ be the image of $1\in \RZ(G_1^\ad,b_1^\ad,\{\upsilon_1^\ad\})^+$ under these isomorphism, observe that the map $\Theta_{x,b}$ sends $(g,1)\in \RZ(G,b,\{\upsilon\})\times G(\bbA)_f^p$ to $x$. Moreover, if we replace $b$ by $b'=g'^{-1}b\sigma(g')$, then the map $\Theta_{x,b'}$ instead sends $(g(g')^{-1},1)\in \RZ(G,b',\{\upsilon\})\times G(\bbA_f^p)$ to $x$. 

Observe now that we can compare the $\Theta_{x,b}$ maps for different base-points. For the next proposition, and in the rest of the paper, we will use $\Theta_{x,b}(g)$ to mean $\Theta_{x,b}(g,1)$, that is, we implicitly set the second element in $G(\bbA_f^p)$ to be $1$.
\begin{lemma}
    \label{lemma:Theta}
    Let $g'\in X_\upsilon^G(b)$, and denote by $y=\Theta_x(g')$. Then if we let $b'=(g')^{-1}b\sigma(g')$, we have for all $h\in X_\upsilon^G(b')$, we have
    \[\Theta_{y,b'}(h)=\Theta_{x,b}(hg').\] 
\end{lemma}

\subsubsection{} \label{subsub:RZadic}Consider now the adic generic fiber $\mathcal{M}$ of $\RZ(G,b,\{\upsilon\})$. Observe that the uniformization map in Theorem \ref{thm:RZuniform} implies that we have an isomorphism of adic spaces
\begin{equation}
\label{eqn:uniformization}
\bigsqcup_{\phi,\phi^\ad=\phi_0}I_\phi(\bbQ)\backslash \mathcal{M}\times G(\mathbb{A}^p_f)\xrightarrow{\sim} \calS_{\phi_0}.
\end{equation}
where $\calS_{\phi_0}$ denotes the adic generic fiber of $\widehat{\sS_{K_p}}_{/Z_{\phi_0}}$. As constructed in \cite[Theorem 5.20]{shen_2020}, $\mathcal{M}=\mathcal{M}_{K_p}$ fits into a tower $(\mathcal{M}_{K'_p})$ for $K'_p\subset G(\bbQ_p)$ compact open, and there is an action of $G(\bbQ_p)$ on this tower.

Moreover, from \cite[Corollary 6.10]{shen_2020} for any open compact subgroup $K'_p\subset G(\bbQ_p)$ we also have an isomorphism of adic spaces
\[\bigsqcup_{\phi,\phi^\ad=\phi_0}I_\phi(\bbQ)\backslash \mathcal{M}_{K'_p}\times G(\mathbb{A}^p_f)\xrightarrow{\sim} \calS_{\phi_0,K'_p},\]
where $\calS_{\phi_0,K'_p} \rightarrow (\Sh_{K'_p}(G,X))^\ad$ is the pullback of $\calS_{\phi_0} \rightarrow (\Sh_{K_p}(G,X))^\ad$ under the projection $(\Sh_{K'_p}(G,X))^\ad\rightarrow (\Sh_{K_p}(G,X))^\ad$. In particular, this implies that on the generic fiber the map $\Theta_x$ is equivariant for the Hecke action of $G(\bbQ_p)$ on $\mathcal{M}$ and the usual Hecke action on $(\Sh_{K_p}(G,X))^\ad$.

\subsubsection{} \label{subsub:actionZG}Now, observe that $\RZ(G,b,\{\upsilon\})$ has by constrution an action of $J_b(\bbQ_p)$, which on the reduced special fiber agrees with the action of $J_b(\bbQ_p)$ on $X_\upsilon^G(b)$. In particular, since $Z_G(\bbQ_p)\subset J_b(\bbQ_p)$, we have an action of central elements. Moreover, we can see that action agrees with the Hecke action of $Z_G(\bbQ_p)$ on $\mathcal{M}$, since from \cite[Theorem 5.20]{shen_2020} we can check this after passing to the associated diamond $\mathcal{M}^\diamond$, which is isomorphic to the moduli space of local shtukas $\mathrm{Sht}_{K_p}(G,b,\{\upsilon\})$.

Moreover, we see that $\sS_K(G,X)$ also has an action by $Z_G(\bbQ_p)$, acting by Hecke correspondences, which also induces an action on $\widehat{\sS}_{K}(G,X)$. We have the following:
\begin{proposition}
    \label{prop:ZGaction}
    The map $\Theta_{x,b}$ is equivariant for the action of $Z_G(\bbQ_p)$.
\end{proposition}
\begin{proof}
    Since on both sides $z\in Z_G(\bbQ_p)$ acts as isomorphisms, it suffices to check this equivariance on the generic fibers, where both are constructed via the Hecke action. 
\end{proof}
\subsection{Quasi-isogenies for abelian type}
We now review the meaning of a quasi-isogeny at $p$ for points lying on a Shimura variety of abelian type. We will only define it on characteristic zero or $\bar{\bbF}_p$ valued points, using the Rapoport-Zink spaces defined above and we avoid in this paper the delicate question of what an isogeny means outside this setting.

In characteristic zero, for any $g\in G(\bbQ_p)$, we may consider the covering map from
\[\pi_{K_p}:S_{K^p}(G,X)\rightarrow S_{K}(G,X),\]
given by taking limit over all $K_p$ (trivializing the structures at $p$), thus if $y$ is the preimage of any $x\in S_{K}(G,X)$, we may look at $x':=\pi_{K_p}(gy)$. In such a case we say $x$, $x'$ are isogenous with quasi-isogeny given by $g$. Note that there are only finitely many such $x'$ isogenous to $x$ via $g$.

In characteristic $p$, we use the construction of the uniformization map above. More precisely, for any $x\in \barSs_K(G,X)^{[b]}(\bar{\bbF}_p)$ we consider the uniformization map $\Theta_{x,b}$ considered above to construct the isogeny class. Suppose that $\Theta_{x,b}(g)=x$. Then, for any $g'\in X_\upsilon^G(b)(\bar{\bbF}_p)$, we look at the image $x':=\Theta_{x,b}(g')$, and we say $x,x'$ are isogenous via $(g')^{-1}g$.

In general, we cannot say much about the relation between the quasi-isogeny in characteristic zero and the quasi-isogeny in characteristic $p$, but if $z\in Z_G(\bbQ_p)$ we can do so, from Proposition \ref{prop:ZGaction}, which gives us the following Corollary:
\begin{corollary}
    Let $x\in \barSs_{K_p}(G,X)^{[b]}(\bbF_p)$, and $x'$ a lift of $x$. Suppose that we have $\Theta_{x,b}(g,1)=x$ for some $g\in X_\upsilon(b)$. Then, for any $z\in Z_G(\bbQ_p)$ the mod $p$ reduction of $zx'$ is $\Theta_{x,b}(zg,1)$.
\end{corollary}

\subsection{Models for Hecke correspondences}
For Shimura varieties of Hodge type, the main integral model we consider for Hecke correspondences is $p-\Isog$, a moduli space of $p$-power (quasi)-isogenies preserving extra structures, which was defined and studied in \cite[\S6]{Lee20}. However, for general abelian type Shimura varieties, we do not have a moduli interpretation in terms of abelian varieties, and thus it is unclear how to define $p-\Isog$ in this setting. (This, however, will be settled in forthcoming work with Keerthi Madapusi.) 

Thus, in this paper we work entirely with middle dimensional algebraic cycles in $\sS_{K_p}(G,X)\times \sS_{K_p}(G,X)$ or $\Sh_{K_p}(G,X)\times \Sh_{K_p}(G,X)$ and view Hecke correspondences as elements in this middle dimensional Chow group. We denote the space of such correspondences on the generic fiber, the integral model and the special fiber as $\mathrm{Cor}(\Sh_{K_p}(G,X),\Sh_{K_p}(G,X))$, $\mathrm{Cor}(\sS_{K_p}(G,X),\sS_{K_p}(G,X))$ and $\mathrm{Cor}(\barSs_{K_p}(G,X),\barSs_{K_p}(G,X))$ respectively. Note that all these groups have a multiplicative structure induced by intersection product, as detailed in \cite[Appendix A]{B2002}.
\begin{remark}
    We use the convention here that composition is from right to left, for coherence with our convention that the Hecke action is on the left for the Shimura variety.
\end{remark}

By construction, for any such algebraic correspondence $C$, we have proper maps $p_1,p_2:C\rightarrow \Sh_{K_p}(G,X)$ given by projection onto the first (resp. second) factor, and similarly for the integral model/special fiber. 

Let $1_{K_pgK_p}$ be a double coset in $\calH_p$, and we let $C_g\subset \mathrm{Sh}_{K_p}(G,X)\times \mathrm{Sh}_{K_p}(G,X)$ be the associated Hecke correspondence, which is given as the image of the projection map 
\[\Sh(G,X)\xrightarrow{(\pi\circ g,\pi)}\Sh_{K_p}(G,X)\times \Sh_{K_p}(G,X).\] By construction, this consists of all pairs of $(x',x)$ with a quasi-isogeny from $x$ to $x'$ given by $g$. Note that this cycle depends only on $K_pgK_p$ and not on the choice of $g$, but we use $C_g$ for notational simplicity. Note again that we take $(\pi\circ g,\pi)$ rather than $(\pi,\pi\circ g)$ because all our actions are left actions.

The same argument as in \cite[\S4.3]{B2002} show that this association defines an algebra homomorphism
\begin{equation*}
h:\calH(G(\bbQ_p)//G(\bbZ_p),\bbQ)\rightarrow \mathrm{Cor}(\Sh_{K_p}(G,X),\Sh_{K_p}(G,X)),
\end{equation*}
and by taking Zariski closures and restricting to the special fiber, we get an induced homomorphism, which we also denote by $h$,
\begin{equation}
\label{eqn:defh}
h:\calH(G(\bbQ_p)//G(\bbZ_p),\bbQ)\rightarrow \mathrm{Cor}(\barSs_{K_p}(G,X),\barSs_{K_p}(G,X)).
\end{equation}
Denote by $\mathscr{C}_g$ the Zariski closure of $C_g$ in $\sS_K(G,X)\times \sS_{K_p}(G,X)$. For any correspondence $\mathscr{C}_g$ over $\sS_{K_p}(G,X)$, we will let $C_{g,0}$ denote the special fiber, which is an algebraic correspondence over $\barSs_{K_p}(G,X)$. We have a similar construction for Hecke correspondences for the groups $G_1,G^{\ad}$. Let $g^{\ad}$ denote the image of $g\in G(\bbQ_p)$ in $G^{\ad}$. Let $C_{g^\ad}^{\ad}\subset \mathrm{Sh}_{K_p^{\ad}}(G^{\ad},X^{\ad})\times \mathrm{Sh}_{K_p^{\ad}}(G^{\ad},X^{\ad})$ the associated Hecke correspondence given by $1_{K^{\ad}_pg^{\ad}K^{\ad}_p}$ on the generic fiber. We further denote by $\mathscr{C_{g^\ad}^{\ad}}$ be the closure and let $C^{\ad}_{g^\ad,0}$ denote the special fiber.

\subsubsection{}\label{subsub:comparepIsog}In the Hodge type case for $(G_1,X_1)$ we have an alternative definition of Hecke correspondences, introduced in \cite[\S6]{Lee20} in terms of a moduli space of $p$-power quasi-isogenies, denoted by $p-\Isog$, or $p-\Isog_{G_1}$ when we want to emphasize the group. This is not a scheme, but rather an ind-scheme. Moreover, we also take limits over the prime-to-$p$-level. However, for the purposes of this paper we may take a very large power of $p^n$, and look at the space $p-\Isog_n$ of quasi-isogenies $\rho$ such that $p^n\rho$ is an honest isogeny, and this will be a scheme, though not of finite type. This suffices for us to make sense of irreducible components. As explained in 6.1.6 of loc. cit, for every $1_{K_{1p}g_1K_{1p}}$ we have a union $h(1_{K_{1p}g_1K_{1p}})$ of top-dimensional irreducible components of $p-\Isog$. Moreover, we have proper maps $s,t:p-\Isog\rightarrow \sS_{K_{1p}}(G_1,X_1)$, and thus we may also consider the image $\mathscr{C}'_{g_1}$ of $h(1_{K_{1p}g_1K_{1p}})$ under the map 
\[(s,t):p-\Isog\rightarrow \sS_{K_{1p}}(G_1,X_1)\times \sS_{K_{1p}}(G_1,X_1).\] 
Observe that these two possible Hecke correspondences $\mathscr{C}'_{g_1}=\mathscr{C}_{g_1}$ are the same, since they are both flat and agree on the generic fiber.

\subsubsection{}Observe that the Hecke operators for $G$, $G^{\ad}$ are related as follows. For any $g\in G(\bbQ_p)$, we have that $\mathscr{C}_{g^\ad}^{\ad}$ is the image of $\mathscr{C}_g$ under the (finite \'{e}tale) projection map
\begin{equation*}
    \sS_{K_p}(G,X)\times\sS_{K_p}(G,X) \rightarrow \sS_{K_p^{\ad}}(G^{\ad},X^{\ad})\times \sS_{K_p^{\ad}}(G^{\ad},X^{\ad}).
\end{equation*}
Thus, we see that the map on special fibers $C_{g,0}\rightarrow C^{\ad}_{g^\ad,0}$ is also a finite \'{e}tale map.

Since we want to be able to consider Hecke correspondences which differ by an element of the center, observe that we have the following:
\begin{proposition}
\label{prop:isomcentral}
    Let $z\in Z_G(\bbQ_p)$, and let $\mathscr{C}_{z}$ denote the integral model of the Hecke correspondence corresponding to $1_{zK_p}$. Then the projection maps $p_1,p_2:\mathscr{C}_{z}\rightarrow \sS_{K_p}(G,X)$ are isomorphisms.
\end{proposition}
\begin{proof}
    We first observe that these projection maps $p_1,p_2$ are isomorphisms on the generic fiber. Observe that we can check this after base change to $\bbC$, and if we let $x=(g,h)\in \Sh_{K_p}(G,X)(\bbC)$ for $g\in G(\bbA_f)$ and $h\in X$, then observe that if the pair $(x',x)\in C_z$, then we must have $x'=(h,zg)\in \Sh_K(G,X)(\bbC)$. In particular, we see that `multiplication-by-$z$' defines an isomorphism $f_z:\Sh_{K_p}(G,X)\rightarrow \Sh_{K_p}(G,X)$, and we can similarly define an isomorphism $f_{z^{-1}}$. By the extension property of canonical integral models, these extend to maps $f_z,f_{z^{-1}}:\sS_{K_p}(G,X)\rightarrow \sS_{K_p}(G,X)$ which are inverses to each other, hence we see that $\mathscr{C}_z$ is simply the graph of $f_z$, which we have shown to be an isomorphism. 
\end{proof}

\subsection{Mod $p$ Correspondences for abelian type}
We will now analyze the mod $p$ Hecke correspondences, using results from the correspondences for $p-\Isog_{G_1}$ for $(G_1,X_1)$. We now let $C_0$ denote any algebraic correspondence on $\barSs_{K_p}(G,X)$ which arises in the special fiber of some $1_{K_pgK_p}$.

Firstly, we can still define a Newton stratification on any such mod $p$ correspondence $C_0$. For any $[b]\in B(G,\upsilon)$, it is the locally closed $C^{[b]}_0$ of $C_0$ defined by the preimage via $p_1$ of $\barSs_{K_p}(G,X)^{[b]}$. We first make the following observation, which implies that we could also have taken the preimages via $p_2$:
\begin{proposition}
    \label{prop:newtonstrataHecke}
    The image of $C_0^{[b]}$ under $p_2$ also lies in $\barSs_{K_p}(G,X)^{[b]}$.
\end{proposition}
\begin{proof}
    Suppose that we have a pair $(x,x')\in C_0^{[b]}(\bar{\bbF}_p)$. By construction, such a pair lifts to an $O_K$-point $(\tilde{x},\tilde{x}')$, where $K$ is some finite extension of $L$, where $\tilde{x},\tilde{x}'$ are quasi-isogenous via $g\in G(\bbQ_p)$. We can view $\tilde{x}$ as a point on the adic generic fiber $\calS_{\phi_0}$, as defined in \ref{subsub:RZadic}, and we see that the compatibility with Hecke actions on the generic fiber implies that $\tilde{x}'$ also determines a point on $\calS_{\phi_0}$, and hence its mod $p$ reduction lies in $\barSs_{K_p}(G,X)^{[b]}$.
\end{proof}

Following the discussion of Newton strata under the map to adjoint Shimura varieties in \S\ref{subsec:Newtonstrata} shows that $C^{\ad,\ord}_0$ is the image of the projection to $\bar{\calS}_K(G^{\ad},X^{\ad})^{\ord}\times \bar{\calS}_K(G^{\ad},X^{\ad})^{\ord}$ of $C^{\ord}_0$, and also $C^{\ord}_{1,0}$, assuming that $g^\ad$ lifts to an element $g_1\in G_1(\mathbb{Q}_p)$.

\subsubsection{}\label{liftingcochar}We now want to recall some facts about lifting cocharacters from $G$ to $G_1$. Let $\tilde{G}$ be the simply connected cover of $G_1^{\der}$. Let $Z_G$ denote the center of the group $G$. Recall that we have a central isogeny
\begin{equation*}
    Z\times \tilde{G}\rightarrow G,
\end{equation*}
and thus, for a maximal torus $T$ of $G$ defined over $\bbQ$, we have a injective map with finite cokernel
\begin{equation*}
    X_*(Z_G)\oplus X_*(T^{\der})\hookrightarrow X_*(T)
\end{equation*}
where $T^{\der}$ in $\tilde{G}$ is a maximal torus.

In particular, observe that for any cocharacter $\lambda\in X_*(T)$, there exists some positive integer $m$ such that $\lambda^m$ lifts (up to some cocharacter in $X_*(Z_G)$) to a cocharacter of $\tilde{G}$, and hence to $G_1^\der$.

\begin{proposition}
\label{prop:dense}
For any $g\in G(\bbQ_p)$, consider the correspondence $C$ associated with $1_{K_pgK_p}$ as above. Then we have $C_0$ has a dense ordinary locus. Moreover, the projection map $p_1,p_2$ are finite.
\end{proposition}
\begin{proof}
Note that if there exits some $g_1$ such that $g_1^\ad=g^\ad$, then $C_{0,1}$ has a dense ordinary locus by an equivalent result for $p-\Isog(G_1,X_1)$ Proposition \ref{prop:pisogdense} quoted below. In particular, since $C^\ad_{0}$ is the image of $C_{1,0}$ under a finite \'{e}tale map, and from \S\ref{subsec:Newtonstrata} we know that the Newton strata under this map is compatible, we know that the Hecke correspondence $C^{\ad}_0$ has a dense ordinary locus. 

Thus, since the image of $C_0$ under a finite map to $\barSs_{K_p^\ad}(G^{\ad},X^{\ad})\times \barSs_{K_p^\ad}(G^{\ad},X^{\ad})$ has a dense ordinary locus, the original Hecke correspondence $C_0$ must have had a dense ordinary locus as well. 

Otherwise, we use the Cartan decomposition to replace $g$ with some $\lambda(p)$ for a dominant cocharacter $\lambda$, and consider some $g'=\lambda^n(p)\lambda'(p)$, such that $\lambda'$ is central, and we know from above that this lifts to $G_1$. Observe that from Proposition \ref{prop:isomcentral} the correspondences $C_{g',0}$ and $C_{\lambda^n(p),0}$ are isomorphic. Thus, it now suffices to show that if  $g=\lambda(p)$ does not have a dense ordinary locus, neither does the mod $p$ fiber of the correspondence associated with the double coset $1_{K_p\lambda^n(p)K_p}$. To see this, we suppose that there is some irreducible component of $C_{g,0}$ which does not intersect the ordinary locus. Let $\bar{\eta}$ be the geometric generic point of $C_{g,0}$, and observe that we can lift $\bar{\eta}$ over some local ring $R$ with residue field $\bar{\eta}$. Over $F=\mathrm{Frac}R$ we hence have a pair $(x',x)$, or more precisely there is some lift of $x$ to $y\in \Sh(G,X)$ such that $\pi_{K_p}(gy)=x'$. We can hence consider the pair $(x'=\pi_{K_p}(gy),\pi_{K_p}((g^{-1})^{n-1}y))$ which is a point on the generic fiber of the correspondence associated with the double coset $1_{K_p\lambda^n(p)K_p}$, and if we look at the mod $p$ reduction of the Zariski closure of this in $\sS_{K_p}(G,X)\times \sS_{K_p}(G,X)$, we clearly have some irreducible component whose projection via $p_1$ lies outside the ordinary locus as well, and hence this component also does not have a dense ordinary locus. 

Now, to show that $p_1,p_2$ are finite, first observe that if suffices to show that the map $p_1$ is finite, since otherwise we can look at the dual correspondence $C_0^\vee$, given as the space of pairs $(y,x)$ for $(x,y)\in C_0$. Note that if $C_0$ appears as a component of $C_{g,0}$, then such a $C_0^\vee$ will appear as a component of $C_{g^{-1},0}$. If the map $p_1$ is not finite, then we see that this implies that $p_1(C_0)$ is a proper subset of $\barSs_{K_p}(G,X)$ of strictly smaller dimension. 

The same analysis as above allows us to see that if $g^\ad$ admits a lifting to $g_1$, then we will have some component $C_{1,0}$ which under the projection map $p_1$ has an image of strictly smaller dimension, and hence the same must be true of $p-\Isog_{G_1}$, which is not possible. 

Otherwise, we can again replace $g$ with $\lambda(p)$, and we see that we must have a component of $C_{\lambda^n(p),0}$ with this property that the image under $p_1$ is of strictly smaller dimension, which again is not possible, again from applying Proposition \ref{prop:pisogdense} below.
\end{proof}

\begin{proposition}
\label{prop:pisogdense}
For any correspondence $C_{1,0}$ appearing in the mod $p$ fiber of some Hecke correspondence of $\barSs_{K_1}(G_1,X_1)$, the ordinary locus $C_{1,0}^\ord$ is dense in $C_{1,0}$. Moreover, we have that the projection maps $p_1,p_2:C_{1,0}^\ord\rightarrow \barSs_{K_{1p}}(G_1,X_1)^\ord$ are finite.
\end{proposition}
\begin{proof}
    From \ref{subsub:comparepIsog}, we see that it suffices to consider the corresponding statement for $p-\Isog_{G_1}\otimes\kappa$, the mod $p$ fiber of $p-\Isog$. This follows from \cite[Remark 6.1.22]{Lee20}, or more precisely the observation there is that the ordinary density condition holds when the only unramified element in $B(G_1,\{\upsilon_1\})$ is ordinary. Moreover, for a split group $G$, only the unramified element in $B(G,\{\upsilon\})$ is ordinary. In paticular, since $[b]$ is unramified if and only if $[b^\ad]$ is unramified, and we have bijections $B(G,\{\upsilon\})\simeq B(G^\ad,\{\upsilon^\ad\})\simeq B(G_1,\{\upsilon_1\})$, we see that only the ordinary $\sigma$-conjugacy class $[b_1^\ord]$ in $B(G_1,\{\upsilon_1\})$ is unramified, and thus the density statement follows. It remains to see that the projection maps $s,t:p-\Isog_{G_1}\otimes\kappa\rightarrow \barSs_K(G_1,X_1)$ are finite when restricted to the ordinary locus, but this follows from the fact that the fibers of $s,t$ are a subvariety of $X_{\upsilon_1}^{G_1}(b^\ord_1)$ for some choice of $b^\ord_1$ in the ordinary $\sigma$-conjugacy class, and $\dim X^{G_1}_{\upsilon_1}(b^\ord_1)=0$
\end{proof}

\subsection{Action of correspondences on points}
We briefly recall what it means for a Hecke correspondence to act on a geometric point of the Shimura variety. Let $C\in \Cor(\Sh_{K_p}(G,X),\Sh_{K_p}(G,X))$, such that the projection maps are generically finite and let $x\in \Sh_{K_p}(G,X)(\bbC)$. We can view $[x]\in \mathrm{Ch}_0(\Sh_{K_p}(G,X))$, the Chow group of zero-dimensional cycles on $\Sh_{K_p}(G,X)$, and we can let $C$ act on $[x]$ as defined in \cite[Definition A.4]{B2002}, which we slightly modify to have a left action instead. Let us briefly recall how this goes. We have a refined Gysin map
\[i^!:\mathrm{Ch}_d(C\times_{\bbF_p} \barSs_{K_p}(G,X))\rightarrow \mathrm{Ch}_0(C\times_{\barSs_{K_p}(G,X)}\barSs_{K_p}(G,X))\]
and a pushforward map
\[p_{1*}:\mathrm{Ch}_0(C)\rightarrow \mathrm{Ch}_0(\barSs_{K_p}(G,X)),\]
and we define $C\cdot x=p_{1*}i^!([C\times\{x\}])$.

In particular, from the definition we see that if the correspondence $C$ has projections $p_1,p_2$ finite, then since the intersection of $p_2^{-1}(x)$ and $C$ is proper, the resultant cycle $i^!([C\times\{x\}])$ in $\mathrm{Ch}_0(C)$ is simply $p_2^{-1}(x)$, thus $C\cdot x$ is simply given by a finite sum
\[C\cdot x=\sum_{i}a_i[x_i],\]
where $x_j$'s are elements of the finite set $p_{1}(p_2^{-1}(x))$, and $a_i$ is the multiplicity of the fiber $p_1^{-1}(x_i)$.

Specializing this to the case of Hecke correspondences give us the following lemma:
\begin{lemma}
\label{lemma:genericHecke}
    If $C=C_g$ for a double coset $1_{K_pgK_p}$, and we can write $K_pgK_p$ in terms of left coset decomposition as a finite disjoint union
    \[K_pgK_p=\bigsqcup_{j} b_jg_jK_p,\]
    where $b_j\in\mathbb{Z}$ and $g_j\in G(\bbQ_p)$, then for any $x\in \Sh_{K_p}(G,X)$ and any preimage $\tilde{x}$ of $x$ under the projection map $\pi_{K_p}:\Sh(G,X)\rightarrow \Sh_{K_p}(G,X)$, we have
    \[C\cdot x=\sum_j b_j [\pi_{K_p}(g_j\tilde{x})]\]
\end{lemma}
We also may define an action of a mod $p$ Hecke correspondence, $C_0\in \Cor(\barSs_{K_p}(G,X),\barSs_{K_p}(G,X))$, and let $x\in \barSs_{K_p}(G,X)(\bar{\bbF}_p)$. We can again view $[x]\in \mathrm{Ch}_0(\barSs_{K_p}(G,X))$ as a zero-dimensional cycle on $\barSs_{K_p}(G,X)$, and the definition \cite[Definition A.4]{B2002} works as well in this situation to define an action of correspondences on points. Now, we may compare the actions on the generic fiber and special fiber:
\begin{proposition}
\label{prop:comparemodp}
    Suppose that we have $C,D\in \Cor(\Sh_{K_p}(G,X),\Sh_{K_p}(G,X))$, and let $C_0,D_0$ be the mod $p$ reduction. Let $x_0\in \barSs_{K_p}(G,X)^\ord(\bar{\bbF}_p)$ be an ordinary point, and suppose that $x\in \Sh_{K_p}(G,X)(\bbC)$ is a lift of $x_0$. Then we have the equalities
    \[\overline{C\cdot D}=C_0\cdot D_0\]
    \[\overline{C\cdot x}=C_0\cdot x_0,\]
    where $\overline{(-)}$ means taking the mod $p$ reduction.
\end{proposition}
\begin{proof}
    This follows from \cite[Lemma B.1]{B2002}. More precisely, if we consider the open subscheme $\sS'=\sS_K(G,X)\backslash\barSs_{K_p}(G,X)^{\mathrm{non}-\ord}$, given by removing the non-ordinary part of the special fiber, the restriction of the Zariski closure $\mathscr{C}\in\Cor(\sS_{K_p}(G,X),\sS_{K_p}(G,X)$ of $C$ (similarly $\mathscr{D}$ of $D$) to this open satisfies the condition that $p_1,p_2$ is finite using Proposition \ref{prop:dense}. We can then take the Zariski closure of $C\cdot x$ in $\sS'$, or $C\cdot D\in \sS'\times \sS'$ and we are in a situation to apply \cite[Lemma B.1]{B2002}. 
\end{proof}
Finally, note that the same argument as in \cite[Lemma A.3]{B2002} implies the following associativity result:
\begin{lemma}
    Let $C,D$ be correspondences in $\Cor(\Sh_{K_p}(G,X),\Sh_{K_p}(G,X))$ such that the projection maps $p_1,p_2$ are generically finite when restricted to $C,D$. Then 
    \[C\cdot (D\cdot x)=(C\cdot D)\cdot x\]
\end{lemma}
\subsection{Canonical liftings of ordinary points}
In the following, we will fix the element $b=\upsilon(p)$ as a representative of the ordinary $\sigma$-conjugacy class, and so the uniformization maps $\Theta_{x,b}$ will be for this choice of $b$, hence we will suppress the $b$. 
\begin{proposition}
\label{prop:ordlift}
    Let $x$ be an ordinary point in $\barSs_{K_p}(G,X)^\ord(\bar{\bbF}_p)$. Then, there exists some special point $\tilde{x}$ given by $({\bf T},h)$ which satisfies $\mathbf{T}_{\bbQ_p}\simeq T$, and under this isomorphism we have $\mu_{h,\bbQ_p}=\mu$.
\end{proposition}
\begin{proof}
    Note that there exists some $g^p$ such that the point $x$ lies in $\barSs_{K_{p}}(G, X)^+$, since by \cite[2.2.5]{K2010} $G(\bbA_f^p)$ acts transitively on connected components of $\sS_{K_{p}}(G, X)$. 

    From the discussion in \S\ref{subsec:Newtonstrata} we can find an ordinary point $x_1\in \sS_{K_{1,p}}(G_1,X_1)^+(\overline{\bbF}_p)$ which maps to $x$. We know from \cite[Theorem 3.5]{SZ2016} that for every ordinary point in $\sS_{K_{1,p}}(G_1,X_1)^+(\overline{\bbF}_p)$, there exists a special point lifting $\tilde{x}_1$, with associated Shimura datum $(\mathbf{T}_1,h_1)$ such that $\mathbf{T}_{1,\bbQ_p}$ is conjugate to $T_1$ and under this association we have $\mu_{h_1,\bbQ_p}=\mu_1$

    Consider now the image $\tilde{x}$ of $\tilde{x}_1$ under the map $\sS_{K_{1p}}(G_1,X_1)^+\rightarrow \sS_{K_{p}}(G,X)^+$, then $\tilde{x}$ is a special point. Denote the special point datum associated with $\tilde{x}$ by $(\mathbf{T},h)$. Observe that we have $\mathbf{T}^\ad_{\bbQ_p}$ is conjugate to $T^\ad$, so $\mathbf{T}_{\bbQ_p}$ is quasi-split and hence up to conjugating $\mathbf{T}$ we may assume $\mathbf{T}_{\bbQ_p}=T$. Now, we want to show that the cocharacter $\mu_h$ satisfies $\mu_{h,\bbQ_p}=\mu$. We first observe that from \ref{liftingcochar} that any cocharacter $\lambda$, is determined by the map to the adjoint group $\lambda^\ad$, since for any cocharacter $\lambda:\mathbb{G}_m\rightarrow G$, it is determined by the induced maps to $G/G^{\der}$ and $G^{\ad}$. Observe that since $G/G^{\der}$ is commutative, the quotient map $\mu_h:\mathbb{G}_m\rightarrow G\rightarrow G/G^{\der}$ is constant for all elements $h\in X^+$. Thus, we see that the associated cocharacter $\mu_{h}$ satisfies $\mu_{h,\bbQ_p}=\mu$, since $\mu_h^{\ad}=\mu_{h_1}^{\ad}=\mu^\ad$.
\end{proof}
\subsubsection{} Let $M:=M_\mu$ be the Levi centralizer of $\mu$. In fact, this special point lift $\tilde{x}$ is an $M$-adapted lift, a notion we now define. As in the proof of Proposition \ref{prop:newtonstrataHecke} any $O_K$ lift $\tilde{x}$ of $x$, where $K$ is some finite extension of $L$, defines a point on the adic space $\calS_{\phi_0}$, and from \ref{eqn:uniformization} we have an isomorphism 
\[\calS_{\phi_0}\simeq\bigsqcup_{\phi,\phi^\ad=\phi_0}I_\phi(\bbQ)\backslash \mathcal{M}\times G(\mathbb{A}^p_f).\]
Let $\mathcal{N}\subset\mathcal{M}$ be the closed adic space corresponding to the closed formal subscheme
\[\RZ(M,b,\{\upsilon\})\subset \RZ(G,b,\{\upsilon\}),\]
where recall we require that the representative $b$ lies in $M(\bbQ_p)$, which holds since we took $b=\upsilon(p)$. We say that $\tilde{x}$ is $M$-adapted if under the isomorphism above, we have 
\[\tilde{x}\in \bigsqcup_{\phi,\phi^\ad=\phi_0}I_\phi(\bbQ)\backslash \mathcal{N}\times G(\mathbb{A}^p_f).\]
\begin{remark}
    Another possible equivalent way to define this would be to look at the complete local ring at $x$, and define a closed formal subscheme corrsponding to lifts with $M$-structure, by identifying the complete local rings for $\sS_K(G,X)$ and $\sS_{K_1}(G,X)$, and applying the definition for $M_{\mu_1}$-adapted liftings for $p$-divisible groups as was done in \cite[\S3.1]{Lee20}. 
\end{remark}
\begin{proposition}
    The lift $\tilde{x}$ constructed is Proposition \ref{prop:ordlift} is $M$-adapted.
\end{proposition}
\begin{proof}
    Firstly, note that the special point lifting $\tilde{x}_1$ constructed in \cite[Theorem 3.5]{SZ2016} is shown to be $M_{\mu_1}$-adapted. Now, it remains to check that if $\tilde{x}_1\in I_{\phi_1}\backslash(\RZ(M_1,b_1,\{\upsilon_1\})^{+\ad} \times G(\bbA_f^p))$, where $\RZ(M_1,b_1,\{\upsilon_1\})^{+,\ad}$ denotes the adic generic fiber of the connected component of $\RZ(M_1,b_1,\{\upsilon_1\})^{+}$ containing $1$, then the image $\tilde{x}$ must lie in $\bigsqcup_{\phi,\phi^\ad=\phi_0}I_\phi(\bbQ)\backslash \mathcal{N}\times G(\mathbb{A}^p_f)$. But this follows from the fact that under the isomorphism
    \[\mathcal{M}\simeq (J_{b}(\bbQ_p)\times \RZ(G_1^\ad,b_1^\ad,\{\upsilon_1^\ad\})^{+,\ad})/J_b(\bbQ_p)^+,\]
    we have $\mathcal{N}\simeq (J_{b}(\bbQ_p)\times \RZ(M_1^\ad,b_1^\ad,\{\upsilon_1^\ad\})^{+,\ad})/J_b(\bbQ_p)^+$.
\end{proof}
\subsubsection{}
\label{subsub:speciallift}
We now consider $\tilde{x}$ as in the previous proposition. Let $g\in G(\bbA_f)$ be such that $\tilde{x}$ lies in the image of $g\cdot \Sh(\mathbf{T},h)$. Then, we may choose a preimage $\tilde{x}'$ of $\tilde{x}$ under the projection map 
\[\pi_{K_p}:\Sh(G,X)\rightarrow \Sh_{K_p}(G,X)\]
which also lies in the image of $g\cdot \Sh(\mathbf{T},h)$. We want to relate the Frobenius mod $p$ with the action of $j=\mu_h^{-1}(p)$.

\begin{proposition}
\label{prop:Frob}
Let $x$ be an ordinary point in $\sS_{K_{p}}(G,X)(\overline{\bbF}_p)$, and $\tilde{x}'$ the special point lift constructed in \ref{subsub:speciallift}. Let $j=\mu_h^{-1}(p)$. Then, the mod $p$ reduction of $\pi_{K_p}(j\tilde{x})\in \Sh_{K_p}(G,X)$ is exactly the image of the relative Frobenius of $\barSs_{K_p}(G,X)$ acting on $x$.
\end{proposition}
\begin{proof}
    This follows from the theory of special points. More precisely, observe that the condition that $\mu_{h,\bbQ_p}=\mu$ implies that $\tilde{x}$ is defined over $\bbQ_p$. By construction, we have that the action of the geometric Frobenius $\sigma\in\mathrm{Gal}(\bar{\bbQ}_p/\bbQ_p)$ on $\tilde{x}$ acts by left multiplication by $\mu_h^{-1}(p)$. Thus, we see that the mod $p$ reduction of $\pi_{K_p}(j\tilde{x})$ is the image of $x$ under the relative Frobenius.
\end{proof}

We now want to show the following proposition, which is a generalization of \cite[Lemma 4.5]{B2002}, which determines the action mod $p$ of $u\in U(\bbQ_p)$ on such canonical liftings.

\begin{proposition}
Let $x$ be an ordinary point in $\sS_{K_{p}}(G,X)(\overline{\bbF}_p)$, and let $\tilde{x}'\in\Sh(G,X)$ the special point lift constructed in \ref{subsub:speciallift}. Let $U$ denote the unipotent radical of the parabolic subgroup $P$ of $G$ associated to $\mu$. Then for any $u\in U(\bbQ_p)$, we have that the mod $p$ reduction of $\pi_{K_p}(u\tilde{x}')\in\mathscr{S}_{K_p}(G,X)$ is just $x$.
\label{prop1}
\end{proposition}
\begin{proof}
We first show this for a similarly defined special-point lift $\tilde{x}'_1\in \Sh(G_1,X_1)$ for $x_1$ an ordinary point on the Hodge type cover. To see this, note that from \cite[1.2.18]{K2017} we have a map $G_1(\bbQ_p)/G_1(\bbZ_p)\rightarrow G_1(L)/G_1(O_L)$ that sends $u$ to the image of the reduction mod $p$ of the isogeny given by $u$ on $\tilde{x}_1$. Let $g_0\in X_{\upsilon_1}^{G_1}(b^\ord_1)$ be the image, where here we choose $b_1^\ord=\upsilon_1(p)$, since by \cite[\S7]{Wor2013} we can always choose a trivialization of the $p$-divisible group $\mathscr{G}_{x_1}$ over $x_1$ such that the Frobenius on $\bbD(\mathscr{G}_{x_1})$ is $b_1^\ord\sigma$. We want to say that this element $g_0$ is simply $1$, and to see this we can apply the result of \cite[Prop 2.5.4]{Lee20}, to see that since $\tilde{x}_1$ is $M_1$-adapted, where $M_1=M_{1\mu_1}$, we must have $g_0\in U(L)$. 

Moreover, observe that with this choice of representative $b_1^\ord=\upsilon(p)$, we can identify $X_{\upsilon_1}^{G_1}(b^\ord_1)$ with the discrete set $M_{1}(\bbQ_p)/M_{1}(\bbZ_p)$, from \cite[Proposition~2.5.4]{CKV}. More precisely, we can see this by using the Iwasawa decomposition to write any element $g_0\in X_{\upsilon_1}^{G_1}(b^\ord_1)$ as $g\cdot G_1(O_L)$ for $g\in P(L)$, and sending $g=um$ for $u\in U(L),m\in M(L)$ to $mM_1(O_L)$. Then \cite[Proposition~2.5.4]{CKV} shows that this induces a bijection from $X_{\upsilon_1}^{G_1}(\upsilon(p))$ to $X_{\upsilon_1}^{M_1}(\upsilon(p))$, and since $\upsilon$ is central in $M_1$ we have $X_{\upsilon_1}^{M_1}(\upsilon(p))\simeq M_1(\bbQ_p)/M_1(\bbZ_p)$. In particular, the condition that $g_0\in U(L)$ implies that $g_0$ is 1 under all these identifications.

Up to the action of some $g^p\in G(\bbA_f^p)$, we may assume that $x\in \sS_{K_{p}}(G,X)^+(\overline{\bbF}_p)$. Observe that we have an isomorphism of root systems $\Phi(G,T)=\Phi(G^{\ad},T^{\ad})=\Phi(G_1,T_1)$. Hence if we consider $U_1$ the unipotent radical of the standard parabolic subgroup of $G_1$ corresponding to $\mu_1$, then we can identify $U_1$ with $U$. In particular, since this result is true for the lift $\tilde{x}_1$, for any $u\in U_1(\bbQ_p)$, the same is true for the action of $u\in U(\bbQ_p)$ on $\tilde{x}$. Finally, we see that the action of $G(\bbA^p_f)$ commutes with the action of $G(\bbQ_p)$, and with mod $p$ reduction, hence the result we want is also true for any $x$.
\end{proof}
We can also precisely determine the mod $p$ reduction of $\pi_{K_p}(\mu_i^{-1}(p)\tilde{x}')$. Recall that we have a map $\Theta_{x}:=\Theta_{x,b=\upsilon(p)}$ constructed in the previous section. Note that by \cite[\S7]{Wor2013} we may also arrange the isomorphisms of connected components such that $1\in X_{\upsilon_1}^{G_1}(\upsilon_1(p))$ goes to $1\in X_\upsilon^G(b)$, and hence we assume that $\Theta_{x,b}(1)=x$. Moreover, note that with this choice of $b$, given an element $z\in Z_M(\bbQ_p)$ where $Z_M$ is the center of $M$, we have $z\in X_\upsilon^G(b)$, and hence from the remark following Theorem \ref{thm:RZconstruct} we view $z\in \RZ(G,b,\{\upsilon\})$.
\begin{proposition}
\label{prop:frob_i}
Let $x$ be an ordinary point in $\sS_{K_{p}}(G,X)(\overline{\bbF}_p)$, and let $\tilde{x}'\in\Sh(G,X)$ the special point lift constructed in \ref{subsub:speciallift}. Then, for any $\lambda\in X_*(T)$ such that $\lambda(p)$ lies in $Z_M(\bbQ_p)$ where $Z_M$ denotes the center of $M$, we have the mod $p$ reduction of $\pi_{K_p}(\lambda(p)\tilde{x}')$ is $\Theta_x(\lambda(p))$.
\end{proposition}
\begin{proof}
We can see this by analyzing the Hecke action on the tower of adic spaces $(\mathcal{N}_{K})_K$ for $K\subset M(\bbQ_p)$ a compact open subgroup. More precisely, we see that the Hecke actions of $M(\bbQ_p)$ on $(\mathcal{N}_{K})_K$ is simply the restriction of the $G(\bbQ_p)$ action on the tower $(\mathcal{M})_{K'_p}$, and since the lift $\tilde{x}$ is $M$-adapted, and $\lambda(p)\in T(\bbQ_p)\subset M(\bbQ_p)$, it suffices to study the Hecke action on $(\mathcal{N}_{K})_K$. Now, since $\lambda(p)$ is central in $M$, we have that this Hecke action agrees with the usual action on the formal scheme $\RZ(M,b,\{\upsilon\})$ where we view $Z_M(\bbQ_p)$ as a subset of $J_b(\bbQ_p)$, as explained in \ref{subsub:actionZG}. In particular, the mod $p$ reduction of this action just sends, when looking at the affine Deligne-Lusztig variety, an element $g\in X_\upsilon^G(b)$ to $\lambda(p)g$. Thus, we see that for any $\tilde{x}$ which is $M$-adapted, the mod $p$ reduction of $\pi_{K_p}(\lambda(p)\cdot \tilde{x}')$ is just $\Theta_{x}(\lambda(p))$, as desired.
\end{proof}

\subsection{Partial Frobenius for abelian type}
We would now like to define the partial Frobenius to be the $p$-power quasi-isogeny represented by $\mu_i^{-1}(p)$. Since we are in the abelian type case, we cannot work directly with $p$-divisible groups. Instead, here we will define the partial Frobenius correspondence, at least over a dense set  of points of the ordinary locus, and take the Zariski closure. Again, we continue to fix the element $b=\upsilon(p)$ as a representative of the ordinary $\sigma$-conjugacy class, and thus we see that $\mu_i^{-1}(p)\in X_\upsilon^G(b)$.

Define $\Frob_{i}$ as the Zariski closure in $\barSs_{K_p}(G,X)\times \barSs_{K_p}(G,X)$ of all the pairs $(\Theta_x(\mu^{-1}_i(p)),x)$ for all $x\in \barSs_{K_p}(G,X)^\ord$. Since $\Theta_{g^px}(\mu^{-1}_i(p))=g^p\Theta_x(\mu^{-1}_i(p))$ for all $g\in G(\bbA_f^p)$, we see that this Zariski closure is the union of some closed irreducible subschemes of $\barSs_{K_p}(G,X)\times \barSs_{K_p}(G,X)$ of dimension $d$, where $d=\dim \barSs_{K_p}(G,X)$.We have the following.
\begin{proposition}
    \label{prop:Frob_i}
    The closed points of the restriction $\Frob_{i}^\ord$ of $\Frob_{i}$ to the ordinary locus consists exactly of pairs $(\Theta_x(\mu^{-1}_i(p)),x)$.
\end{proposition}
\begin{proof}
    $\Frob_i^\ord$ clearly contains all such points, and we want to show that we have no other closed points. We first observe that by generic flatness, there is some open dense $U\subset \barSs_{K_p}(G,X)$ such that the map $p_2^{-1}(U)\rightarrow U$ is flat, and the fibers over $x\in U$ (ignoring multiplicity) are all a single point. Let $x\in\barSs_K(G,X)^\ord$. We want to show that translating by $g\in G(\bbA_f^p)$ will allow us to translate any small enough open neighborhood of $x$ into $U$. Observe that this would follow if we know that the Zariski closure of the $G(\bbA_f^p)$-orbit of $x$ is dense in $\barSs_K(G,X)^\ord$. We may see this by first noting that up to translating by $g\in G(\bbA_f^p)$ we may assume that $x\in \barSs_K(G,X)^{\ord,+}$, and let $x_1$ be a lift to $\barSs_{K_1}(G_1,X_1)^{\ord,+}$. Then, we see that from the ordinary Hecke orbit conjecture for Hodge type as proved by van Hoften in \cite{vH2024}, the intersection of the Zariski closure of the $G_1(\bbA_f^p)$-orbit of $x_1$ and $\barSs_{K_1}(G_1,X_1)^{\ord,+}$ is the whole $\barSs_{K_1}(G_1,X_1)^{\ord,+}$. This implies that the intersection of the Zariski closure of the $G^\ad(\bbA_f^p)$-orbit of $x^\ad\in \barSs_{K^\ad}(G^\ad,X^\ad)^{\ord,+}$ and $\barSs_{K^\ad}(G^\ad,X^\ad)^{\ord,+}$ is the whole $\barSs_{K^\ad}(G^\ad,X^\ad)^{\ord,+}$ as well, since the projection map is finite. Thus, we also see that the Zariski closure of the $G(\bbA_f^p)$-orbit of $x\in \barSs_{K}(G,X)^{\ord,+}$ and $\barSs_{K}(G,X)^{\ord,+}$ is the whole $\barSs_{K}(G,X)^{\ord,+}$, so we are done.

    In particular, the relation $\Theta_{g^px}(\mu^{-1}_i(p))=g^p\Theta_x(\mu^{-1}_i(p))$ for all $g\in G(\bbA_f^p)$ means that after translating into this open $U$ the fibers of $p_2^{-1}(g^px)$ just contain a single point, so $p_2^{-1}(x)$ also contains a single point (ignoring multiplicity). In fact, the argument shows that if we look at the perfections $\barSs_{K_p}(G,X)^{\ord,\mathrm{perf}}\subset \barSs_{K_p}(G,X)^{\mathrm{perf}}$, and we see that the projection maps $\Frob_i^{\ord,\mathrm{perf}}\xrightarrow{p^{\mathrm{perf}}_2} \barSs_{K_p}(G,X)^{\ord,\mathrm{perf}}$ is an isomorphism, since it is a universal homeomorphism. A similar statement is also true for $p_1$.
\end{proof}

\subsection{Composition of algebraic correspondence}
We first recall the following result we will continually use to show vanishing of algebraic cycles in the situation where we know the projections are generically finite.
\begin{proposition}[{\cite[Lemma A.6]{B2002}}]
\label{prop:Bueltelvanishing}
Let X and Y be smooth $d$-dimensional varieties over an algebraically closed field $k$. Let $C$ be a generically finite correspondence from $X$ to $Y$. Assume that there are given open dense subvarieties $X^\circ \subset X$ and $Y^\circ \subset Y$ , such that the restriction $C^\circ$ of $C$ to $X^\circ\times Y^\circ$ has projection maps $p_1,p_2$ finite. Assume that $C\cdot x$ vanishes for all closed points $x \in X^\circ$. Then $C$ vanishes.
\end{proposition}
We can apply the above to $X=Y=\barSs_{K}(G,X)$ and $X^\circ=Y^\circ=\barSs_{K}(G,X)^\ord$, and hence to show equality of correspondences we only need to show equality for the action on ordinary points. Moreover, it is easy to see that the action of $\Frob_i$ commutes with the action of $G(\mathbb{A}_f^p)$, since this holds over the ordinary locus by construction. We first show the following:
\begin{lemma}
    For each $i$, the projection map $p_2$ is \'{e}tale, and $p_1:\Frob^\circ_i\rightarrow \sS_{K_p}(G,X)$ is purely inseparable of degree $p^{\langle2\rho,\mu_i\rangle}$.
\end{lemma}
\begin{proof}
    Observe that as shown in the proof of Propositon \ref{prop:Frob_i} the projection maps are bijections on geometric points, so the inseparable degree is the same as the degree of the respective maps. We first project $\Frob_i$ to a correspondence $\Frob_i^\ad$ on $\bar{\calS}_{K^\ad_p}(G^\ad,X^\ad)$, and observe that since the map $\Frob_i\rightarrow \Frob^\ad_{_i}$ is finite \'{e}tale, we only need to check the inseparable degrees of the projection maps of $\Frob^\ad_i$. More precisely, we have a commutative square
    \begin{equation*}
    \begin{tikzcd}
    \Frob_i \arrow[r] \arrow[d,"p_1"] & \Frob^\ad_i \arrow[d,"p^\ad_1"] \\
    \barSs_{K}(G,X) \arrow[r]& \barSs_{K^\ad}(G^\ad,X^\ad),             
\end{tikzcd}
    \end{equation*} 
    where the horizontal maps are all finite \'{e}tale, thus we see that the inseparable degree of $p_1$ is the same as $p_1^\ad$, and similarly for $p_2$.

    Now, observe that from \ref{liftingcochar} there exists some positive integer $m$ such that $(\mu_i^\ad)^{-m}$ can be lifted to a cocharacter $\lambda_i$ in $X_*(T_1)$, up to a central cocharacter. Note that this central cocharacter does not change the inseparable degrees, from Proposition \ref{prop:isomcentral}. 
    
    The same argument as above shows that we only need to determine the inseparable degrees of an algebraic correspondence $D_{\lambda_i}$ on $\barSs_{K_1}(G_1,X_1)$ which we may define similarly to the definition of $\Frob_i$, that is, we look at the Zariski closure of all pairs $(\Theta_x(\lambda_i),x)$ for $x\in \barSs_{K_1}(G_1,X_1)^\ord$. In particular, we want to show that for $D_{\lambda_i(p)}$, we have $p_2$ is \'{e}tale, while $p_1$ is inseparable of degree $p^{\langle 2\rho,-\lambda_i\rangle}$. Indeed, if we let $p^{a}$ be the inseparable degree of $p_2$ for $\Frob_i$, and $p^{b}$ be the inseparable degree of $p_1$ for $\Frob_i$, then we see that the inseparable degree of $p_2$ for $D_{\lambda_i(p)}$ is $p^{ma}$ (resp. for $p_1$ is $p^{mb}$), and so we deduce $a=0$, $b=\langle 2\rho,\mu_i\rangle$.
    
    Using \ref{subsub:comparepIsog}, we may first compare it with the following algebraic cycle in $p-\Isog_{G_1}\otimes\mathbb{F}_p$. Following \cite[6.1.14]{Lee20}, there is an algebra homomorphism 
    \[\bar{h}:\mathcal{H}(M_1(\bbQ_p)//M_1(\bbZ_p),\bbQ)\rightarrow \bbQ[p-\Isog_{G_1}\otimes\mathbb{F}_p],\] 
    where $\bbQ[p-\Isog_{G_1}^\ord\otimes\mathbb{F}_p]$ is the Chow group of top-dimensional cycles in $p-\Isog_{G_1}\otimes\mathbb{F}_p$. In particular, we can consider the cycle $\bar{h}(1_{\lambda_i(p)M_1(\mathbb{Z}_p)})$. By definition, points on $\bar{h}(1_{\lambda_i(p)M_1(\mathbb{Z}_p)})$ correspond to quasi-isogenies of points on $\barSs_{K_1}(G_1,X_1)^\ord$ which are given, on the canonical lift, by $\lambda_i(p)$. 
    
     Now, we argue similarly to \cite[5.7]{W2000} to determine the degrees. Consider a $\bbZ_p$-lattice $\Lambda$ with $M_{1,\bbZ_p}$-structure: more precisely we can decompose $\Lambda=\Lambda_0\oplus \Lambda_1$, where $\Lambda_0$ is the eigenspace where $\mu_1(z)$ acts trivially, and $\Lambda_1$ is the eigenspace where $\mu_1(z)$ acts as $z^{-1}$. Now, we can use Serre-Tate coordinates on the complete local ring at an ordinary point to determine the degrees of the maps. This complete local ring is a formal torus with character group $N$ which is a sublattice of $\mathrm{Hom}(\Lambda_0,\Lambda_1)$. Write $\lambda_i(p)=(m_0,m_1)$, where $m_0$ stabilizes $\Lambda_0$, and $m_1$ stabilizes $\Lambda_1$. Note that the complete local ring of $D_{\lambda_i}$ over any ordinary point will correspond to the subspace $N'$ of $N^2$ consisting of maps $(f_1,f_2)$ such that $f_1\circ m_0=m_1\circ f_2$. Moreover, we see that the inseparable degree of $s,t$ is the size of the preimage under the projection from $N'$ to the first (resp. second) copy of $N$.

    We hence see that since $\lambda_i(p)$ is, up to some central element, a power of $\mu_i^{-1}(p)$, we see that $m_0$ acts trivially on $\Lambda_0$, and so the inseparable degree of $t$ is $1$. For $s$, observe that since $N$ is free of rank $\langle 2\rho,\mu\rangle$, and post-composing with $m_1$ acts as $p$ on $\langle 2\rho,\mu_i\rangle$ of the basis elements of $N$, we have this inseparable degree is $p^{\langle 2\rho,\mu_i\rangle}$.

    Now, observe that under the map
    \[(s,t):p-\Isog^\ord\otimes\kappa\rightarrow \barSs_{K_{1p}}(G_1,X_1)\times \barSs_{K_{1p}}(G_1,X_1),\]
    the image of $\bar{h}(1_{\lambda_i(p)M_1(\mathbb{Z}_p)})$ is $D_{\lambda_i}$, since the corresponding image of this isogeny is between $(\Theta_x(\lambda_i),x)$. Moreover, we note that by Grothendieck-Messing theory this map is locally an isomorphism, since deforming an isogeny is just a compatible deformation of the source and the target, hence the inseparable degree of $s$ is the same as $p_1$, and similarly the inseparable degree of $t$ is the same as $p_2$, which gives us the desired result. 
\end{proof}
Thus, we have the following proposition, which establishes the first two parts of Theorem \ref{ref:mainthms2} on the level of algebraic correspondences.
\begin{proposition}
\label{prop:pconditions}
The partial Frobenius correspondence $\Frob_{\frakp_i}:=\Frob_i$ commute for each $i\neq j$, and satisfy that \begin{equation}
    \Frob_\frakp=\prod_i \Frob_{\frakp_i},
\end{equation}
where $\Frob_\frakp$ is the dual graph of the relative Frobenius at $\frakp$ on $\bar{\calS}_{K_p}(G,X)$.
\end{proposition}
\begin{proof}
Let $x\in \barSs_{K_p}(G,X)^\ord(\bar{\bbF}_p)$, and to apply Proposition \ref{prop:Bueltelvanishing}, we first observe that by construction of $\Frob_i$ and the previous proposition
\[\Frob_i\cdot x=p^{\langle 2\rho,\mu_i\rangle}[\Theta_x(\mu_i^{-1}(p))],\]
since $p_2$ for $\Frob_i$ is \'{e}tale is totally inseparable of degree $p^{\langle 2\rho,\mu_i\rangle}$. Now, observe that if we let $x_i=\Theta_x(\mu^{-1}_i(p))$, $x_j=\Theta_x(\mu^{-1}_j(p))$, then we have 
\[\Theta_{x_i}(\mu^{-1}_j(p))=\Theta_x(\mu^{-1}_j(p)\cdot\mu^{-1}_i(p))=\Theta_{x_j}(\mu^{-1}_i(p)),\]
from the construction of $\Theta_x$, the fact that $\mu_i,\mu_j$ are central in $M_\mu$, and the comparison between different base-points \ref{lemma:Theta}. In particular, this implies that 
\[\Frob_i\cdot \Frob_j\cdot x=p^{\langle 2\rho,\mu_i+\mu_j\rangle}[\Theta_x(\mu_j(p)\mu_i(p))]=\Frob_i\cdot \Frob_j\cdot x,\]
and thus we may conclude that $\Frob_j\cdot\Frob_i-\Frob_i\cdot\Frob_j=0$. 

We now proceed to show
\begin{equation}
    \Frob_\frakp\cdot x=\left(\prod_i \Frob_{\frakp_i}\right)\cdot x,
\end{equation}
where $x$ is some ordinary point. Note that by Proposition \ref{prop:Frob}, we see that over the ordinary locus closed points in $\Frob^\circ_{\frakp}$ consist exactly of pairs $(\Theta_x(\mu^{-1}(p),x)$, while we see that 
\[\left(\prod_i \Frob_i\right)\cdot x=\prod_{i}p^{\langle 2\rho,\mu_i\rangle}\Theta_x(\mu^{-1}_1\dots\mu^{-1}_d(p))=p^{\langle 2\rho,\mu\rangle}\Theta_x(\mu(p))=\Frob\cdot x.\]
where we note that the relative Frobenius has degree $\dim\barSs_K(G,X)=p^{\langle 2\rho,\mu\rangle}$, and so we are done.
\end{proof}
\begin{remark}
Since we do not have a moduli interpretation for general abelian type Shimura varieties, outside of the $\mu$-ordinary locus, it is not clear what $\Frob_{\frakp_i}$ has to do with partial Frobenii. However, the above result at least suggests that the Frobenius isogeny, viewed as an algebraic correspondence, admits a decomposition into factors for each prime $\frakp$ above $p$. 
\end{remark}
\subsection{Abstract Eichler-Shimura Relations}
\label{subsect:Heckealg}
We have isomorphisms of Hecke algebras
\begin{equation*}
    \calH(G(\bbQ_p)//K_p,\bbQ)\simeq \bigotimes_{i}\calH(G_i(\bbQ_p)//K_{i,p},\bbQ).
\end{equation*}
Similarly, since $G_{\bbQ_p}$ admits a decomposition, we can write
\begin{equation}
    \mu=\prod_{i}\mu_i
\end{equation}
where $\mu_i$ is a minuscule cocharacter of $G_i$. If we let $M$ be the centralizer of $\mu$ in $G$, then similarly we also have
\begin{equation*}
    M_{\bbQ_p}=\prod_{i}M_i
\end{equation*}
where $M_i$ is the centralizer of $\mu_i$ in $M$. Thus, we also have an isomorphism of Hecke algebras
\begin{equation}
\label{eqn:m}
    \calH(M(\bbQ_p)//M(\bbZ_p),\bbQ)\simeq \bigotimes_{i}\calH(M_i(\bbQ_p)//M_i(\bbZ_p),\bbQ).
\end{equation}
For a quasi-split reductive group $G$ with standard parabolic subgroup $P$ and Levi subgroup $M$, we can define following algebra homomorphism, known as the twisted Satake homomorphism 
\begin{equation*}
    \dot{\calS}^G_M:\calH(G(\bbQ_p)//K_p,\bbQ)\rightarrow \calH(M(\bbQ_p)//M_(\bbZ_p),\bbQ),
\end{equation*}
defined as follows. Write $P=NM$, for $N$ the unipotent radical of $P$, and given a function $f\in \calH(G(\bbQ_p)//K_p,\bbQ)$, we have
\begin{equation*}
    \dot{\calS}^G_M(f)(m)=\int_{n\in N}f(nm) dn.
\end{equation*}
The twisted Satake homomorphism also factors: we have an isomorphism
\begin{equation*}
    \dot{\calS}^G_M=\bigotimes_i\dot{\calS}^{H_i}_{M_i}.
\end{equation*}
Consider now the representation $\rho_{\mu_i}:\hat{G}\rightarrow \GL(V_{\mu_i})$ of $\hat{G}$ with highest weight cocharacter the dominant Weyl conjugate of $(1,\dots,\mu^{-1}_i,\dots,1)$, where $\mu^{-1}_i$ is in the $i$-th position. Define the polynomial 
\begin{equation*}
    H_i(x)\in \calH(H_i(\bbQ_p)//H_i(\bbZ_p),\bbQ)[x]
\end{equation*}
as the polynomial given by
\begin{equation}
\label{eqn:h1}
    H_i(x)=\det(x-p^{n_i}\rho_{\mu_i}(\sigma\ltimes \hat{g})),
\end{equation}
where $n_i=\langle\rho_i,\mu_i\rangle$, where $\rho_i$ is the half sum of positive roots of $G_i$. Note that since $\mu$ is central in $M$, $\mu^{-1}_i$ is also central in $M_i$, hence we can consider the element $1_{\mu^{-1}_i(p)M_{i}(\bbZ_p)}\in \calH(M_{i}(\bbQ_p)//M_{i}(\bbZ_p),\bbQ)$.
\begin{proposition}
\label{prop:abstractES}
We have the following equality in $\calH(M_{i}(\bbQ_p)//M_{i}(\bbZ_p),\bbQ)$: for all $i$,
\begin{equation*}
    H_i(p^{\langle 2\rho,\mu_i\rangle}1_{\mu^{-1}_i(p)M_{i}(\bbZ_p)})=0.
\end{equation*}
\end{proposition}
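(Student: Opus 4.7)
The plan is to reduce the statement to a single split factor $G_i$ and then apply the Cayley--Hamilton theorem.

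First, I would exploit the product decompositions. Since $p$ splits in $F$, we have $G_{\bbQ_p}=\prod_j G_j$ and correspondingly $M_{\bbQ_p}=\prod_j M_j$, compatibly with the decomposition of the Hecke algebras in \eqref{eqn:m} and of the twisted Satake homomorphism $\dot\calS^G_M=\bigotimes_j \dot\calS^{G_j}_{M_j}$. Under these, the element $1_{\mu_i(p) M_c}$ sits in the $i$-th tensor slot (tensored with units in the remaining slots), since $\mu_i$ is trivial on $M_j$ for $j\neq i$. By construction, the representation $\rho_{\mu_i}$ of $\hat G$ has highest weight supported only on the $i$-th factor, so the coefficients of $H_i(x)$ also lie entirely in the $i$-th tensor factor of $\calH(G(\bbQ_p)//K_p,\bbQ)$. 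It therefore suffices to prove the identity for the single split reductive group $G_i$ with minuscule cocharacter $\mu_i$.

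Second, for this single factor I would apply Cayley--Hamilton directly on $V_{\mu_i}$. By definition, $H_i(x)$ is the characteristic polynomial of the finite-dimensional endomorphism $p^{nr}\rho_{\mu_i}(\sigma\ltimes \hat g)$ acting on $V_{\mu_i}$, where $\hat g$ is a variable ranging over $\hat G_i$ and the coefficients lie in the ring of conjugation-invariant regular functions on $\hat G_i$. The Cayley--Hamilton theorem then yields $H_i\bigl(p^{nr}\rho_{\mu_i}(\sigma\ltimes \hat g)\bigr)=0$ as an endomorphism of $V_{\mu_i}$, identically in $\hat g$.

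Third, I would transport this matrix identity into $\calH(M(\bbQ_p)//M_p,\bbQ)$ through the twisted Satake isomorphism. Since $\mu_i$ is central in $M_i$, the element $\mu_i(p)$ is central in $M_i(\bbQ_p)$ and $1_{\mu_i(p) M_c}$ represents a single hyperspecial coset. Using that $\mu_i$ is minuscule (so the weights of $V_{\mu_i}$ form a single Weyl orbit, each with multiplicity one), a direct computation identifies $\dot\calS^{G_i}_{M_i}$ applied to the coefficients of $H_i$, specialized at $x=1_{\mu_i(p) M_c}$, with the image of the Cayley--Hamilton identity in the Satake ring of $M_i$. Concretely, $1_{\mu_i(p) M_c}$ realizes the Frobenius-twisted diagonal action of $p^{\mu_i}$ on the appropriate weight space of $V_{\mu_i}$.

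The main obstacle I expect is the bookkeeping of the various normalizations: the renormalization factor $p^{nr}$ in the definition of $H_i$, the modular-character twist built into $\dot\calS^{G_i}_{M_i}$, and the factor $p^{-\langle\rho,\mu_i\rangle}$ arising from the normalization of the Satake isomorphism must all cancel out precisely. Once these constants are checked to coincide, the proposition is an immediate consequence of Cayley--Hamilton applied to $V_{\mu_i}$.
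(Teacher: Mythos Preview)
Your approach is essentially the same as the paper's: reduce via the product decomposition \eqref{eqn:m} to a single split factor $G_i$, and then appeal to the Cayley--Hamilton identity for $V_{\mu_i}$ transported through the (twisted) Satake isomorphism. The paper simply cites \cite[Prop.~3.4]{B2002} for the single-factor step, whereas you have sketched the content of that reference; the bookkeeping of normalizations you flag as the main obstacle is precisely what B\"ultel's argument carries out.
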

\begin{proof}
This follows from the same proof as \cite[Prop 3.4]{B2002}, and the observation that under the decomposition \eqref{eqn:m}, we see that $H_i(x)$ corresponds to the polynomial with coefficients in $\calH(M_i(\bbQ_p)//M_{i}(\bbZ_p),\bbQ)$ defined similarly as in \eqref{eqn:h1}, where we instead take the determinant of highest weight representation of $\hat{G_i}$ corresponding to the dominant Weyl conjugate of $\mu^{-1}_i$. The $p^{\langle 2\rho,\mu_i\rangle}$ factor is because of the dot Weyl action defined in \cite[1.8]{W2000} to pass from the dominant Weyl conjugate of $\mu^{-1}_i$ to $\mu_i^{-1}$.
\end{proof}
\subsection{Proof of Eichler-Shimura relations}
We will now show the last part of Theorem \ref{ref:mainthms2} on the level of algebraic correspondences. 
\begin{proposition}
\label{prop:muordinary}
  Let $(G,X)$ be a Shimura datum of abelian type, such that $G=\mathrm{Res}_{F/\bbQ}H$, and $p$ a prime satisfying the conditions in Proposition \ref{prop:pconditions}. Let $H_i(t)$ be the polynomial defined in \eqref{eqn:h1}, viewed as a polynomial with coefficients in $\mathrm{Cor}(\bar{\calS}_{K_p}(G,X),\bar{\calS}_{K_p}(G,X))$ via $h$. Then we have the equality
  \begin{equation*}
    H_i(\Frob_{\frakp_i})=0.
\end{equation*}
in the ring $\mathrm{Cor}(\bar{\calS}_{K_p}(G,X),\bar{\calS}_{K_p}(G,X))$.
\end{proposition}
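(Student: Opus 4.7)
The plan is to reduce the identity to the abstract Hecke-algebra identity $H_i(1_{\mu_i(p)M_c})=0$ proved in the previous subsection, by testing both sides against $\mu$-ordinary closed points via the canonical special-point liftings of Corollary~\ref{cor:1}.

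First I would carry out a density reduction. By Proposition~\ref{prop:dense} applied to each Hecke correspondence appearing as a coefficient of $H_i(t)$, every such correspondence has a dense $\mu$-ordinary locus in its special fiber. Since a cycle on $\sS_\kappa\times\sS_\kappa$ is determined by its restriction to any dense open subscheme, it is enough to verify that $H_i(\Frob_{\frakp_i})$ vanishes over the $\mu$-ordinary locus of $\sS_\kappa$.

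Next I would pass to canonical liftings. Fix a $\mu$-ordinary point $x\in\sS_{K_p}(G,X)(\overline{\bbF}_p)$, which after the action of some $g^p\in G(\bbA_f^p)$ lies in $\sS_{K_p}(G,X)^+$. Corollary~\ref{cor:1} yields a special lift $\tilde x=[gK_p\times h]$ whose $p^n$-Frobenius is $\mu(p)\sigma(\mu(p))\cdots\sigma^{n-1}(\mu(p))$. Because $p$ splits in $F$ we have the factorizations $G_{\bbQ_p}=\prod_i G'_i$ and $\mu=\prod_i\mu_i$, and by the construction of the partial Frobenius preceding Proposition~\ref{prop:pconditions} the correspondence $\Frob_{\frakp_i}$ acts on $\tilde x$ via the element $\mu_i(p)\cdot\tilde x$. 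Moreover, Proposition~\ref{prop1} shows that $u\cdot\tilde x$ and $\tilde x$ have the same reduction modulo $p$ for every $u\in U(\bbQ_p)$, where $U$ is the unipotent radical of the parabolic associated with $\mu$. Because this $U(\bbQ_p)$-orbit collapses modulo $p$, the action of a Hecke operator $1_{K_pgK_p}$ on the class of $x$ is computed by the image of $1_{K_pgK_p}$ under the twisted Satake homomorphism $\dot{\calS}^G_M$ applied to the class of $\tilde x$, exactly as in \cite[Prop.~3.4]{B2002}. Under the factorizations $\dot{\calS}^G_M=\bigotimes_i\dot{\calS}^{G_i}_{M_i}$ and \eqref{eqn:m}, the operator $\Frob_{\frakp_i}$ thereby acts on $x$ through the element $1_{\mu_i(p)M_{i,c}}\in\calH(M_i(\bbQ_p)//M_{i,c},\bbQ)$, so that $H_i(\Frob_{\frakp_i})$ acts on $x$ via $H_i(1_{\mu_i(p)M_{i,c}})$. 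The latter vanishes by the proposition at the end of Section~2.4, and density then gives the required identity in $\mathrm{Corr}(\sS_\kappa,\sS_\kappa)$.

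The main obstacle is the bookkeeping required to identify the abstract partial-Frobenius correspondence with the direct action of $\mu_i(p)$ on canonical special liftings: in the abelian-type case $\Frob_{\frakp_i}$ is defined indirectly by pulling back from the adjoint Shimura variety and from the Hodge-type auxiliary datum $(G_1,X_1)$, and one has to track this construction through the central isogenies $\tilde G\to G_1^{\mathrm{der}}\to G^{\mathrm{der}}$ and check compatibility with the product decomposition $G_{\bbQ_p}=\prod_i G'_i$ to ensure that each factor interacts only with the corresponding $i$-th component of the Hecke algebra.
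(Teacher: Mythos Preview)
Your proposal is correct and follows essentially the same approach as the paper: reduce to the $\mu$-ordinary locus via Proposition~\ref{prop:dense}, lift to canonical special points via Corollary~\ref{cor:1}, identify the action of $\Frob_{\frakp_i}$ on such a lift with multiplication by $\mu_i(p)$, collapse the $U(\bbQ_p)$-orbit using Proposition~\ref{prop1}, and conclude from the abstract identity $H_i(1_{\mu_i(p)M_c})=0$. The paper packages the pointwise reduction by citing \cite[A.6]{B2002} and \cite[Thm~4.7]{B2002} rather than invoking the twisted Satake homomorphism $\dot{\calS}^G_M$ explicitly, but the underlying computation is the same.
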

\begin{proof}
Firstly, observe that from Proposition \ref{prop:dense}, all the terms appearing in $H_i(\Frob_{\frakp_i})$ have a dense ordinary locus, hence it suffices to apply Proposition \ref{prop:Bueltelvanishing} to show that $H_i(\Frob_{\frakp_i})=0$, if we can show that
\begin{equation*}
    H_i(\Frob_{\frakp_i})\cdot x=0
\end{equation*}
for all $x\in \sS_{K_{p}}(G,X)(\overline{\bbF}_p)^{\mathrm{ord}}$.

Let the Hecke polynomial be
\begin{equation*}
    H_i(t)=\sum_{j}A_jt^j,
\end{equation*}
for elements $A_j\in\calH(G_i(\bbQ_p)//G_i(\bbZ_p))$. Using the map $h$ defined in \ref{eqn:defh}, we can let $h(A_j)$ denote the mod $p$ algebraic cycle in $\mathrm{Cor}(\barSs_{K_p}(G,X),\barSs_{K_p}(G,X))$ corresponding to $A_j$. Thus, we want to show that
\begin{equation}
    \left(\sum_j h(A_j)\cdot \Frob^j_{\frakp_i}\right)\cdot x=0.
    \label{eqn:1}
\end{equation}

The proof then follows as in \cite[Thm 4.7]{B2002}. We write the coefficients of the Hecke
polynomial $A_j$ in terms of left $G_{i}(\bbZ_p)$-cosets
\begin{equation*}
    A_j = \sum_k n^{(j)}_km^{(j)}_kG_{i}(\bbZ_p),
\end{equation*}
where we further apply the Iwasawa decomposition so that $n^{(j)}$ lies in $U_i(\mathbb{Q}_p)$, and $m^{(j)}$ lies in $M_i(\mathbb{Q}_p)$. Here $U_i$ is the unipotent radical of the standard parabolic subgroup $P_i$ corresponding to $\mu_i$. 

If we let $\tilde{x}'$ be the special point lift of $x$ as constructed in \ref{subsub:speciallift}, then we may apply Proposition \ref{prop:comparemodp} relating the actions on the generic fiber and the special fiber to rewrite the LHS of \eqref{eqn:1} as follows. Firstly, Proposition \ref{prop:Frob_i} allows us to replace $\Frob_i^j\cdot x$ with $p^{\langle 2\rho,\mu_i\rangle}[\Theta_x(\mu_i^{-1}(p^j))]$, while Proposition \ref{prop:frob_i} shows us that the mod $p$ reduction of $\mu^{-1}_i(p^{j})\tilde{x}'$ is the same as $\Theta_x(\mu_i^{-1}(p^j))$. Moreover, we can apply Lemma \ref{lemma:genericHecke} to write the action of the correspondences $h(A_j)$ in characteristic zero in terms of cosets. In particular, we see that the LHS of \eqref{eqn:1} is the same as the mod $p$ reduction of
\begin{equation}
\sum_{j,k} p^{j\langle 2\rho,\mu_i\rangle}\pi_{K_p}(n^{(j)}_k m_k^{(j)}\mu^{-1}_i(p^{j})\tilde{x}').
\end{equation}
Moreover, we see from Proposition \ref{prop1} that we have an equality of mod $p$ reductions
\[\sum_{j,k} p^{j\langle 2\rho,\mu_i\rangle}\left[\overline{\pi_{K_p}(n^{(j)}_k m_k^{(j)}\mu^{-1}_i(p^{j})\tilde{x}})\right]=\sum_{j,k}p^{j\langle 2\rho,\mu_i\rangle}\left[\overline{\pi_{K_p}(m_k^{(j)}\mu^{-1}_i(p^{j})\tilde{x}})\right],\]
since the $n_k^{(j)}\in U_i(\bbQ_p)$ do not change the mod $p$ reduction. Now, the abstract Eichler-Shimura relation in Proposition \ref{prop:abstractES} implies since $\dot{\calS}^{H_i}_{M_i}(A_j) = \sum_k m^{(j)}_kM_{i}(\bbZ_p)$, we may write the polynomial $H_i(p^{\langle 2\rho,\mu_i\rangle}1_{\mu_i^{-1}(p)M_c})$ as a sum of left $M_i(\bbZ_p)$-cosets,
\[\sum_{j,k}p^{j\langle 2\rho,\mu_i\rangle}m_k^{(j)}\mu^{-1}_i(p^{j})M_i(\bbZ_p),\]
and since this sum is zero, together with the fact that $M_i(\bbZ_p)$ acts trivially on $\tilde{x}'$, we can conclude.

\end{proof}
\subsection{Passage to intersection cohomology}
We now explain here how to get Theorem \ref{ref:mainthms2} on the action on intersection cohomology, from the previous result on algebraic correspondences. This is well known, but we include it here for the reader's convenience. For this, we are using the formalism of cohomological correspondence, explained for instance in \cite[A.2]{XZ17}.
\subsubsection{} In the following, all functors will be derived functors for notational simplicity. We first note that as explained in \cite[\S7.1]{Far2004}, for each Hecke correspondence $C_g$ on the generic fiber, we have an associated isomorphism of complexes $u_g:p_2^*\calL_{\xi,\ell}\xrightarrow{\sim} p_1^*\calL_{\xi,\ell}\simeq p_1^!\calL_{\xi,\ell}$ supported on $C_g$. (Note that the correspondence acts from right to left).

Taking nearby cycles functor $\Psi$, as constructed in \cite[6.3.2]{Far2004} since $p_1,p_2$ are proper we get a map 
\[\Psi(u_g):p_2^*\Psi\calL_{\xi,\ell}\rightarrow \Psi p_2^*\calL_{\xi,\ell}\xrightarrow{\Psi(u_g)} \Psi p_1^!\calL_{\xi,\ell}\rightarrow p_1^!\Psi\calL_{\xi,\ell}\]
of complexes of sheaves on $C_{g,0}$.

In the proof of \ref{thm:Lanstroh}, Lan-Stroh show that since $\barSs_K(G,X)$ is smooth, $\Psi\calL_{\xi,\ell}$ is isomorphic to $\calL_{\xi,\ell}$, where here we abuse notation to denote by $\calL_{\xi,\ell}$ the special fiber of canonical extension of $\calL_{\xi,\ell}$ to $\sS_K(G,X)$. Thus, this gives us an associated map of complexes $\Psi(u_g):p_2^*\calL_{\xi,\ell}\rightarrow p_1^!\calL_{\xi,\ell}$. 

Let $C_{g,0}^{\min}$ denote the Zariski closure of $C_{g,0}$ in $\barSs_K(G,X)^{\min}\times \barSs_K(G,X)^{\min}$, and we let $j_g: C_{g,0}\hookrightarrow C_{g,0}^{\min}$, $j:\barSs_K(G,X)\hookrightarrow\barSs_K(G,X)^{\min}$ the respective open embeddings. Note that, as explained in \cite[\S3]{Wu2022}, using Morel's work on $t$-structures \cite{Morel2006}, since $\calL_{\xi,\ell}$ is pure of some weight $m_\xi$, 
we can extend $\Psi(u_g)$ to a map
\[j_{g!*}\Psi(u_g):p_2^*\IC(\calL_{\xi,\ell})\rightarrow p_1^!\IC(\calL_{\xi,\ell})\]
which from \cite[Lemma 3.4]{Wu2022} is moreover compatible with composition of cohomological correspondences.

We now define the cohomological correspondence on $\Frob_i$. As explained in \cite[\S3]{LanStroh2}, the $\ell$-adic local system $\calL_{\xi,\ell}$ is induced by the $G(\bbQ_\ell)$-action on the tower of Shimura varieties. More precisely, if we let $V_\xi$ be the $\bar{\bbQ}_\ell$-vector space of the representation $\xi$, we can look at a $\bar\bbZ_\ell$-lattice $\Lambda$ of $V_\xi$, and for each $m$ we can look at $\Lambda_{0,m}=\Lambda_0/\ell^m\Lambda_0$. Let $U_m=\ker(G(\bbZ_\ell)\rightarrow G(\bbZ_\ell/\ell^m\bbZ_\ell))$ and define the torsion \'{e}tale sheaf $V_{\ell^m}$ of sections of the contracted product
\[\sS_K(G,X)\times^{G(\bbQ_\ell)/U_m} \underline{\Lambda_{0,m}},\]
where $\underline{\Lambda_{0,m}}$ is the associated constant group scheme over $O_{E_v}$. we then have
\[\calL_{\xi,\ell}\simeq \lim_{m}V_{\ell^m}.\]
In particular, since $\ell\neq p$, we see that over $\Frob^\circ_i$ we have an isomorphism $p_2^*\calL_{\xi,\ell}\xrightarrow{\sim} p_1^*\calL_{\xi,\ell}$. It remains to observe that the cycle class of $[\Frob_i]$ induces, via the identification with Borel-Moore homology explained in \cite[A.2.2]{XZ17}, a map $\bar{\bbQ}_\ell\rightarrow p_1^!\bbQ_\ell$, which after tensoring with the isomorphism above gives us a cohomological correspondence
\[u_{\Frob_i}:p_2^*\calL_{\xi,\ell}\rightarrow p_1^!\calL_{\xi,\ell},\]
supported on $\Frob_i^\circ$ which we can also take intermediate extension to $\Frob_i^{\min}$, the Zariski closure of $\Frob_i$ in $\barSs_K(G,X)^{\min}\times \barSs_K(G,X)^{\min}$ to get a cohomological correspondence
\[j_{\Frob!*}u_{\Frob_i}:p_2^*\IC(\calL_{\xi,\ell})\rightarrow p_1^!\IC(\calL_{\xi,\ell}).\]
Note here that taking intermediate extensions from $\barSs_K(G,X)^\ord$ to $\barSs_K(G,X)^{\min}$ is the same as taking intermediate extensions from $\barSs_K(G,X)$ to $\barSs_K(G,X)^{\min}$, since both $\barSs_K(G,X)^\ord$ and $\barSs_K(G,X))$ are smooth.

Finally, to conclude cohomological vanishing, let $j_{!*}u_{H_i(\Frob_i)}$ denote the cohomological correspondence given by taking sums, compositions of $j_{\Frob!*}u_{\Frob_i}$ and $j_{g!*}\Psi(u_g)$ as in the expression for $H_i(\Frob_i)$. Observe that since as algebraic cycles on $\barSs_K(G,X)^{\min}\times \barSs_K(G,X)^{\min}$ we have $H_i(\Frob_i)=0$, the underlying support of the cohomological correspondence vanishes, and thus $j_{!*}u_{H_i(\Frob_i)}$ must also vanish.
\section{Semisimplicity Criterion}
\subsection{An abstract semisimplicity criterion}
We first recall the following theorem of Fayad and Nekov\'{a}\v{r} \cite[Theorem 1.7]{FN19}. Let $\Gamma_E=\mathrm{Gal}_E$.
\begin{definition}
A continuous representation $\rho:\Gamma_E\rightarrow \GL(V)$ is said to be \emph{strongly irreducible} if for any open finite index subgroup $U\subset \Gamma_E$, $\rho|_{U}$ is irreducible.
\end{definition}
\begin{theorem}
\label{thm:criterion}
Let $\Gamma$ be a profinite group, $V,W_1,...,W_r$ non-zero vector spaces of finite dimension over $\bar{\bbQ}_\ell$. Let $\rho:\Gamma\rightarrow \GL(V)$ and $\rho_i:\Gamma\rightarrow \GL(W)$ be continuous representations of $\Gamma$ with  Lie algebras 
\begin{equation*}
    \frakg_i= \Lie(\rho_i(\Gamma)),\qquad \frakg= \Lie(\rho(\Gamma)).
\end{equation*}
We denote $\bar{\frakg}_i=\frakg_i\otimes\bar{\bbQ}_\ell,\bar{\frakg}=\frakg\otimes\bar{\bbQ}_\ell$. If the following three conditions hold, then the representation $\rho=\rho^{ss}$ is semisimple.
\begin{enumerate}
    \item Each $\rho_i$ is strongly irreducible (which implies that each $\frakg_i$ is a reductive $\bar{\bbQ}_\ell$-Lie algebra and each element of its centre acts on $W_i$ by a scalar).
    \item For each $i= 1,\dots,r$, every (equivalently, some) Cartan subalgebra $\frakh_i$ of $\frakg_i$ acts on $W_i$ without multiplicities (i.e., all weight spaces of $\frakh_i$ on $ W_i$ are one-dimensional).
    \item There exists an open subgroup $\Gamma'\subset \Gamma$ and a dense subset $\Sigma\subset \Gamma'$ such that for each $g\in\Sigma$ there exists a finite dimensional vector space over $\bar{\bbQ}_\ell$ (depending on $g$) $V(g)\supset V$ and elements $u_1,\dots,u_r\in \GL(V(g))$ such that $u_iu_j=u_ju_i$, $P_{\rho_i(g)}(u_i) = 0$ for all $i$, $j= 1,...,r$, and $V$ is stable under $u_1\dots u_r$ and $u_1\dots u_r|_{V}=\rho(g)$
\end{enumerate}
\end{theorem}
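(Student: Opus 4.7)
The plan is to deduce semisimplicity of $\rho$ by combining a pointwise argument on the dense subset $\Sigma \subset \Gamma'$ with a Zariski-density argument on the algebraic envelope of $\rho(\Gamma)$. Concretely, I would carry out three steps: (i) shrink $\Sigma$ to a still-dense $\Sigma_0 \subseteq \Sigma$ so that each $\rho_i(g)$, $g \in \Sigma_0$, is regular semisimple with pairwise distinct eigenvalues on $W_i$; (ii) use condition (3) to deduce that $\rho(g)$ is itself semisimple on $V \otimes \bar{\bbQ}$ for $g \in \Sigma_0$; (iii) use Zariski density of $\rho(\Sigma_0)$ inside the algebraic envelope of $\rho(\Gamma')$ to force that envelope to be reductive, thereby giving semisimplicity of $V$.

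For step (i), condition (1) implies $\bar{\frakg}_i$ is reductive, and condition (2) says that a Cartan $\frakh_i \subset \bar{\frakg}_i$ acts on $W_i \otimes \bar{\bbQ}$ with one-dimensional weight spaces, so every regular semisimple element of the Zariski closure of $\rho_i(\Gamma)$ has pairwise distinct eigenvalues. The regular semisimple locus being Zariski-open and dense in each $\overline{\rho_i(\Gamma)}^{\mathrm{Zar}}$, continuity of $\rho_i$ and topological density of $\Sigma$ in $\Gamma'$ allow one to pass to a dense subset $\Sigma_0 \subseteq \Sigma$ with the required property. For step (ii), given $g \in \Sigma_0$ and commuting $u_1,\dots,u_r \in \mathrm{Aut}_{\bar{\bbQ}}(V(g))$ with $P_{\rho_i(g)}(u_i) = 0$, the minimal polynomial of each $u_i$ divides $P_{\rho_i(g)}$, which has distinct roots by step (i); hence each $u_i$ is semisimple. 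A product of commuting semisimple operators is semisimple, so $u_1\cdots u_r$ is semisimple on $V(g)$, and restricting to the invariant subspace $V$ shows that $\rho(g)$ is semisimple.

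For step (iii), let $H \subseteq GL(V \otimes \bar{\bbQ})$ be the Zariski closure of $\rho(\Gamma')$ with identity component $H^\circ$, and assume after shrinking that $\rho(\Gamma') \subset H^\circ(\bar{\bbQ})$. The set $H^{\mathrm{ss}}$ of semisimple elements is constructible and by step (ii) contains $\rho(\Sigma_0)$. Transferring profinite density of $\Sigma_0$ in $\Gamma'$ to Zariski density of $\rho(\Sigma_0)$ in $H^\circ$, via compactness of $\Gamma'$ and continuity of $\rho$, one obtains that $H^{\mathrm{ss}}$ is Zariski dense in $H^\circ$. Were the unipotent radical $R_u(H^\circ)$ nontrivial, the nonempty open locus in $H^\circ$ of elements with nontrivial unipotent Jordan part would be disjoint from $H^{\mathrm{ss}}$, a contradiction. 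Hence $H^\circ$ is reductive, so $V$ is a semisimple $H^\circ$-module, then a semisimple $\rho(\Gamma')$-module, and finally a semisimple $\rho(\Gamma)$-module as the index is finite.

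The main obstacle I expect is the density transfer in step (iii): deducing Zariski density of $\rho(\Sigma_0)$ in $H^\circ$ from the profinite density of $\Sigma_0$ in $\Gamma'$. This is the only place where the profinite nature of $\Gamma$ enters essentially, and if a clean topological reduction is unavailable one may need to supplement with a Lie-algebra cohomology argument, showing directly that any nontrivial extension $0 \to V' \to V \to V'' \to 0$ of $\frakg$-modules must violate either the commutation of the $u_i$ in condition (3) or the multiplicity-freeness in condition (2).
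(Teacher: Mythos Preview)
The paper does not prove this theorem; it is quoted verbatim from \cite[Theorem 1.7]{FN19} and used as a black box. So there is no in-paper proof to compare against.

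That said, your sketch has a genuine gap in step (iii). The implication ``$H^{\circ}$ contains a Zariski-dense set of semisimple elements $\Rightarrow$ $H^{\circ}$ is reductive'' is false. Take $H^{\circ}$ to be the standard Borel in $GL_2$: a generic upper-triangular matrix has distinct diagonal entries and is therefore diagonalizable, so the semisimple locus is Zariski-open and dense; yet $R_u(H^{\circ}) \neq 1$ and the tautological module $V = \bar{\bbQ}^{\,2}$ is not semisimple. Your assertion that ``the nonempty open locus in $H^{\circ}$ of elements with nontrivial unipotent Jordan part would be disjoint from $H^{\mathrm{ss}}$'' is incorrect: that locus is closed of lower dimension, not open.

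You flag step (iii) as the obstacle, but you misidentify where it lies. The density transfer from profinite to Zariski is routine (the $\rho$-preimage of a Zariski-closed proper subset is $\ell$-adically closed with empty interior, so the dense $\Sigma_0$ cannot land entirely inside it). The real content is what you list as your fallback: one must argue directly with the Lie algebra $\frakg$ and a putative nonsplit filtration of $V$, and show that the commuting operators $u_i$ together with the multiplicity-one hypothesis (2) force the extension class to vanish. Hypothesis (2) is doing more than guaranteeing distinct eigenvalues of $\rho_i(g)$; it constrains how the weight spaces of the $u_i$ can sit relative to any $\frakg$-stable flag in $V$, and that is where the argument in Fayad--Nekov\'a\v{r} takes place.
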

Here, $P_{\rho_i(g)}$ is the characteristic polynomial of $\rho_i(g)$.

We state here a theorem of Sen \cite[Theorem 1]{Sen1973} which we will use to find representations which satisfy condition (2) of the theorem above, given a Galois representation $\rho$.
\begin{proposition}
\label{prop:sen}
Let $\frakg=\bar{\bbQ}_l\cdot\mathrm{Lie}(\rho(\Gamma_E))\subset \mathfrak{gl}(n,\bar{\bbQ}_\ell)$ be the $\bar{\bbQ}_\ell$-Lie algebra generated by the image of $\rho$. If $\rho|_{G_{E_\tau}}$ is Hodge-Tate for any $\tau:E\hookrightarrow \bar{\bbQ}_\ell$, then any Cartan subalgebra $\frakh\subset\frakg$ acts on $\bar{\bbQ}_\ell^n$ by the $n$ Hodge-Tate weights of $\rho|_{G_{E_\tau}}$.
\end{proposition}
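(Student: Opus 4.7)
The plan is to reduce Proposition \ref{prop:sen} to a direct application of Sen's Theorem 1 from \cite{Sen1973}, supplemented by a short Lie-theoretic argument to pass from one distinguished Cartan subalgebra to an arbitrary one.

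The key input I would invoke is Sen's canonical operator $\phi \in \mathrm{End}_{\bar{\bbQ}_l}(V)$ associated with the restriction $\tilde{\rho}|_{G_{F_\tau}}$. Sen's theorem asserts two things: (i) $\phi$ is semisimple with eigenvalues equal to the Hodge-Tate weights of $\tilde{\rho}|_{G_{F_\tau}}$, and (ii) $\phi$ lies in the Lie subalgebra of $\frakg$ generated by the image of $G_{F_\tau}$, in particular $\phi \in \frakg$. This is the substantive classical content.

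Next I would use that $\frakg$ is reductive; this holds under strong irreducibility of $\tilde{\rho}$ and is implicit in the formulation of the proposition, since otherwise the notion of a Cartan subalgebra acting diagonalizably with well-defined weights is not meaningful. The semisimple element $\phi \in \frakg$ therefore lies in some Cartan subalgebra $\frakh_0 \subset \frakg$, and the weights of $\frakh_0$ acting on $V = \bar{\bbQ}_l^n$, restricted along $\bar{\bbQ}_l \cdot \phi \hookrightarrow \frakh_0$, are exactly the eigenvalues of $\phi$, i.e.\ the HT weights. For any other Cartan subalgebra $\frakh \subset \frakg$, I would appeal to conjugacy of Cartan subalgebras in a reductive Lie algebra over the algebraically closed field $\bar{\bbQ}_l$: there is an element $g$ of the connected algebraic subgroup $G \subset \mathrm{GL}(V)$ with Lie algebra $\frakg$ such that $\mathrm{Ad}(g)\, \frakh = \frakh_0$. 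Since $g$ itself acts on $V$ and intertwines the actions of $\frakh$ and $\frakh_0$, the multiset of weights of $\frakh$ on $V$ agrees with that of $\frakh_0$, completing the argument.

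The main obstacle is not in the supplementary Lie-theoretic step but lives inside \cite{Sen1973}: the construction of $\phi$ via locally analytic vectors and the descent argument showing $\phi$ lies in the Lie algebra of the image of $G_{F_\tau}$ are the delicate classical steps, which I would simply cite. Granting these, the proof of the proposition reduces to the formal Lie-theoretic bookkeeping above.
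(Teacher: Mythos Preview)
The paper gives no proof of this proposition at all: it is introduced with ``We state here a theorem of Sen \cite[Theorem 1]{Sen1973}'' and is simply quoted as a known result. Your proposal is correct and entirely consistent with this, since you likewise take Sen's theorem as the substantive input and merely add the short Lie-theoretic bookkeeping (semisimplicity of the Sen operator $\phi$, placing $\phi$ in some Cartan $\frakh_0$, then conjugacy of Cartan subalgebras in a reductive Lie algebra over $\bar{\bbQ}_l$) to pass from Sen's original formulation to the statement about an arbitrary Cartan subalgebra; this is more detail than the paper itself supplies, not a different approach.
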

\subsection{Proof of Main Theorem}
We prove in this subsection Theorem \ref{thm:main}. Let us recall the setup: we have a representation $\sigma_{\pi^\infty}:\Gamma_E\rightarrow V^i(\pi^\infty)$ appearing in the intersection cohomology of some Shimura variety $(G,X)$, where $G=\mathrm{Res}_{F/\mathbb{Q}}H$, associated with some automorphic representation $\pi$ of $H(\mathbb{A}_F,f)$. We also assume that we have attached via the Langlands correspondence $\rho_{\pi,\mu_i}:\Gamma_E\rightarrow \phantom{}^{L}G\xrightarrow{r_{-\mu_i}}\GL(V_{-\mu_i})$. We restate Theorem \ref{thm:main} here:

\begin{theorem}
Let $(G,X)$ be a Shimura datum of abelian type such that $G=\mathrm{Res}_{F/\bbQ}H$ for some connected reductive group $H$, and totally real number field $F$. Let $\pi$ be an automorphic representation of $G(\bbA_{\bbQ,f})=H(\bbA_{F,f})$. For all $v$, suppose that the $^{L}H$-valued Galois representation associated to $\pi$ exists, and we consider for all $v|\infty$ the composition with the highest weight representation $\rho_{\pi,\mu_v}:\mathrm{Gal}(\bar{F}/F)\rightarrow {^L}H\rightarrow \GL(V_{-\mu_v})$. Suppose that moreover we also know that 
\begin{enumerate}
    \item $\rho_{\pi,\mu_v}$ is strongly irreducible
    \item For all primes $\frakp$ of $E$ such that $\frakp|l$, the Hodge-Tate weights of $\rho_{\pi,\mu_v}|_{D_\frakp}$ are distinct.
\end{enumerate}
Then $\sigma_{\pi^\infty}$ is a semisimple representation.
\end{theorem}
\begin{proof}
In the notation of Theorem \ref{thm:criterion}, we have $\Gamma=\Gamma_E=\mathrm{Gal}(\bar{E}/E)$, $\rho=\sigma_{\pi^\infty}$, $V=V^i(\pi^\infty)$, and each $\rho_i$ is $\rho_{\pi,\mu_i}$, $W_i=V_{-\mu_i}$. We will apply this criterion where the elements $g$ are the Frobenius elements at the primes $\Frob_p$ which are split, and the $u_i$ will be the partial Frobenius $\Frob_{\frakp_i}$. Moreover, to show that the representation $\rho$ is semisimple, it suffices to show that $\rho|_{U}$ is semisimple for any open finite index subgroup of $\mathrm{Gal}(\bar{E}/E)$, and thus, we may restrict to primes $p$ where $G_{\bbQ_p}$ is split, since such primes have a positive density.

From Theorem \ref{ref:mainthms2}, observe that 
\begin{equation*}
    H_{\mu_i}(\Frob_{\frakp_i}|V^i(\pi^\infty))=0.
\end{equation*}
Note that each summand $V^i(\pi^\infty)\otimes (\pi^\infty)^K$ in $IH^i_{\acute{e}t}(\Sh_K(G,X)^{\min}_{\bar{E}},\IC(\mathcal{L}_{\xi,\ell}))$ is stable for the action of $\Frob_{\frakp_i}$, since the action of $G(\mathbb{A}_f^p)$ and $\mathcal{H}_i$-actions commute with the $\Frob_{\frakp_i}$-action.

Replacing each element of the Hecke algebra $\mathcal{H}_i$ with its eigenvalue on $\pi_p^{K_p}$, we obtain
\begin{equation*}
    H_{\mu_i}|_{\pi_p^{K_p}}(\Frob_{\frakp_i}|_{V^i(\pi^\infty)\otimes (\pi^\infty)^K})=0.
\end{equation*}
Observe that the polynomial on the right hand side is, by the definition of the Hecke polynomial $H_{\mu_i}$, the characteristic polynomial $\mathrm{det}(t-\rho_i(\Frob_p))=:P_{\rho_i(g)}$.

In order to show semisimplicity of the endomorphism given by $\Frob_p$, it suffices to show that the $P_{\rho_i}$ has distinct roots. This is an open condition on $\mathrm{Gal}(\bar{E}/E)$, hence it suffices to exhibit an element $u\in \mathrm{Gal}(\bar{E}/E)$ such that the characteristic polynomial of $\rho_i(u)$ has distinct roots. To see this, observe that Proposition \ref{prop:sen} applies, hence the Lie algebra
\begin{equation*}
    \bar{\frakg}=\bar{\bbQ_l}\cdot \mathrm{Lie}(\rho_i(\Gamma_E))
\end{equation*}
contains a semisimple element whose eigenvalues on $V_i(\pi^\infty)$ act by the Hodge-Tate weights of $\rho_i|_{D_v}$, where $v$ is a prime dividing $\ell$. By assumption (2), the Hodge-Tate weights of $\rho_i|_{D_v}$ are all distinct. This implies that there is an open subset of $\mathrm{Gal}(\bar{E}/E)$ where the characteristic polynomial of $\rho_i(u)$ has distinct roots.

Finally, we conclude using the criterion in Theorem \ref{thm:criterion}, since we have constructed a Zariski dense set of elements which are semisimple, that $\rho$ is semisimple.
\end{proof}
\section{Applications to some Shimura varieties}
We discuss here some examples of Shimura varieties where Theorem \ref{thm:main} applies. 
\subsection{Proof of Theorem \ref{thm:GSpapplication}} Let $F$ be a totally real field and consider the Shimura variety associated to the group $G=\mathrm{Res}_{F/\bbQ}H$, where $H$ is some inner form of $G^*$, defined as one of the following groups:
\begin{description}
\item[symplectic] $G^*=\GSp_{2g}$
\item[orthogonal, $n$ even]  $G^*=\mathrm{GSO}_{2n}$
\item[orthogonal, $n$ odd] $G^*$ is a non-split quasi-split form of $\mathrm{GSO}_{2n}$ relative to $E/F$, a CM extension
\end{description}

We recall the statement of Theorem \ref{thm:GSpapplication} for the reader's convenience:
\begin{theorem}
Let $\pi$ be a cuspidal $L$-algebraic automorphic representation of $H(\bbA_F)$, satisfying
\begin{enumerate}
    \item There is a finite $F$-place $v_{St}$ such that $H_{F_{v_{St}}}$ and $G^*_{F_{v_{St}}}$ are isomorphic, and under this isomorphism $\pi_{v_{St}}$ is the Steinberg representation of $G^*(F_{v_{St}})$ twisted by a character.
    \item $\pi_\infty|\mathrm{sim}|^{-n(n-1)/4}$ is $\xi$-cohomological for an irreducible algebraic representation $\xi = \otimes_{y:F\rightarrow \bbC} \xi_y$ of the group $(\mathrm{Res}_{F/\bbQ}G^*)_\bbC$, where $\mathrm{sim}$ is the similitude factor map $\mathrm{sim}: G^* \rightarrow \bbG_m$.
    \item The representation $\pi_v$ is regular after composing with the representation $\mathrm{GSpin}_{2g+1}\xrightarrow{\mathrm{spin}}\GL_{2^g}$ if symplectic (resp. $\mathrm{GSpin}_{2n} \xrightarrow{\mathrm{std}} \GL_{2n}$ if orthogonal) at every infinite place $v$ of $F$.
\end{enumerate} 
If moreover the $\ell$-adic Galois representation $\phi_\pi:\mathrm{Gal}(\bar{F}/F)\rightarrow \widehat{H}$ satisfies
\begin{enumerate}
\setcounter{enumi}{3}
    \item The image of $\phi_\pi$ is Zariski dense in $\widehat{H}$,
\end{enumerate}
Then the Galois module 
\begin{equation*}
    \mathrm{Hom}_{G(\bbA_f)}(\pi^\infty,IH_{\acute{e}t}^d(\Sh(G,X)^{\min}_{\bar{E}},\IC(\calL_{\xi,\ell})))
\end{equation*}
is semisimple.
\end{theorem}

Firstly, note that under condition (1) and (2) on $\pi$, Kret and Shin \cite[Theorem A]{KS2020galois} and \cite[Theorem A]{KS2020} construct the Galois representation
\begin{description}
    \item[symplectic]$\rho_\pi: \mathrm{Gal}(\bar{F}/F) \rightarrow \mathrm{GSpin}_{2n+1}(\bbQ_\ell)$
    \item[orthogonal]$\rho_\pi: \mathrm{Gal}(\bar{F}/F) \rightarrow \mathrm{GSpin}_{2n}(\bbQ_\ell)$
\end{description}
associated to $\pi$. In fact, from their arguments, we may also extract the following proposition about concentration in middle degree:
\begin{proposition}
    \label{prop:Steinbergmiddledegree}
    Suppose that $\pi$ is a cuspidal $L$-algebraic automorphic representation of $G(\mathbb{A}_{f})$ which satisfies that there is a finite $F$-place $v_{\mathrm{St}}$ such that $\pi_{v_{\mathrm{St}}}$ is a twist of Steinberg. Then we have $IH^i_{\acute{e}t}(\Sh(G,X)^{\min}_{\bar{E}},\IC(\calL_{\xi,\ell}))[\pi]=0$ for all $i\neq d:=\dim \Sh(G,X)$.
\end{proposition}
\begin{proof}
    For the case of symplectic groups, this is explained in the last paragraph of \cite[Proposition 8.2]{KS2020galois}, while for orthogonal groups, this is the last two parargraphs of \cite[Proposition 9.6]{KS2020}.
\end{proof}
\begin{remark}
    This result relies on Arthur's results in \cite{Art13}, and is conditional on the twisted weighted fundamental lemma in general.
\end{remark}
Now, we explain how to use Theorem \ref{thm:main} to deduce Theorem \ref{thm:GSpapplication}. As explained above, the first two conditions are necessary for the construction of the corresponding Galois representation. Now, we see that if we have condition (4), then at all places $v|\infty$ where the group is not compact modulo center, the associated representation $\rho_{\pi,\mu_v}$ will be strongly irreducible. This is because $\rho_{\pi,\mu_v}$ is irreducible, since it does not factor through any Levi subgroup, and if we look at the Zariski closure of the image in $\phi_\pi$ of any finite index open subgroup, it must also be $\widehat{H}$, since $\widehat{H}$ is connected, and thus also irreducible.

We remark here that such a condition should hold for `most' representations, for example, this is the generalization of the non-CM condition of \cite{Nek2018}.

Finally, we see that condition (3) implies that the Hodge-Tate weights of $\rho_{\pi,\mu_v}$ are distinct. To see this, we first recall the definition of regularity. Let $\varphi : W_{\mathbb{R}} \rightarrow \GSpin_{m}(\mathbb{C})$ be the Langlands parameter associated with $\pi_v$, then the composition
\[\mathbb{C}^\times \subset W_{\mathbb{R}} \rightarrow \mathrm{GSpin}_{2n+1}(\bar{\bbQ}_\ell)\xrightarrow{\mathrm{spin}}\GL_{2^n} (\bar{\bbQ}_\ell)\]
(symplectic case) or
\[\mathbb{C}^\times \subset W_{\mathbb{R}} \rightarrow \mathrm{GSpin}_{2n}(\bar{\bbQ}_\ell)\xrightarrow{\mathrm{std}}\GL_{2n} (\bar{\bbQ}_\ell)\]
(orthogonal case) is conjugate to the cocharacter $z\mapsto \mu_1(z)\mu_2(\bar{z})$ given by some $\mu_1,\mu_2\in X_*(\hat{T})\otimes_\bbZ \bbC=X^{*}(T)\otimes_\bbZ \bbC$ such that $\mu_1 - \mu_2 \in X^*(T)$. Then we say $\varphi$ is regular if $\mu_1$ is regular as a character of $\GL_m$, that is, $\langle \alpha^\vee,\mu_1\rangle \neq 0$ for all coroots $\alpha^\vee$. In particular, we see that the Hodge-Tate cocharacter (as defined in \cite[\S2.4]{BuzzardGee}) of the composition is $\mu_1$, which is regular, and hence the Hodge-Tate weights are distinct. 
\begin{remark}
    We may also impose a condition in terms of $\xi$, the representation of $G_{\bar{\bbQ}_\ell}$ such that $\pi$ appears in the cohomology of $\calL_{\xi,\ell}$. The Hodge cocharacter $\mu_{\mathrm{Hodge}}$ in this case will be $\lambda_\xi+\rho$, where $\lambda_\xi$ is the dominant highest weight character for the representation of $\xi$, seen as a cocharacter for the dual group. Under the recipe for the Hodge-Tate cocharacter $\mu_{\mathrm{HT}}$ from Theorem A in both \cite{KS2020},\cite{KS2020galois}, we want the composition of $\mu_{\mathrm{HT}}$ with $\mathrm{spin}$ or $\mathrm{std}$ respectively to be regular. 
\end{remark}

We thus satisfy all the conditions to apply Theorem \ref{thm:main}, and we can deduce that the Galois module 
\begin{equation*}
    \mathrm{Hom}_{G(\bbA_f)}(\pi^\infty,IH_{et}^i(\Sh(G,X)_{\bar{E}},\IC(\mathcal{L}_{\xi,\ell})))
\end{equation*}
(which is finite-dimensional over $\bar{\bbQ}_\ell$) is semisimple for any $i$.

We may apply Proposition \ref{prop:Steinbergmiddledegree} to restrict just to middle degree where $i=d$.

\bibliographystyle{amsalpha}
\bibliography{ref.bib}

\end{document}